\newtheorem{thm}{Theorem}
\newtheorem{lem}[thm]{Lemma}
\newtheorem{defn}{Definition}
\newtheorem{rem}[thm]{Remark}
\newtheorem{nota}{Notation}
\newcommand{\R}{\mathbb{R}}
\newcommand{\bL}{\boldsymbol L}
\newcommand{\bW}{\boldsymbol W}
\newcommand{\bbf}{\boldsymbol f}
\newcommand{\bg}{\boldsymbol g}
\newcommand{\bv}{\boldsymbol v}
\newcommand{\x}{\boldsymbol x}
\newcommand{\y}{\boldsymbol y}
\newcommand{\z}{\boldsymbol z}
\newcommand{\cA}{\mathcal{A}}
\newcommand{\cB}{\mathcal{B}}
\newcommand{\cD}{\mathcal{D}}
\newcommand{\set}[1]{\{#1\}}
\newcommand{\Set}[1]{\Big\{#1\Big\}}
\newcommand{\inn}[1]{\langle#1\rangle}
\newcommand{\norm}[1]{\Vert#1\Vert}
\begin{document}
\baselineskip=18pt

\title[Regularity criterion for the Navier--Stokes equations]{A new local regularity criterion for suitable weak solutions of the Navier--Stokes equations in terms of the velocity gradient}

\author{Hi Jun Choe \& Joerg Wolf \& Minsuk Yang}

\address{H. J. Choe: Department of Mathematics, 
Yonsei University,
Yonseiro 50, Seodaemungu Seoul, 
Republic of Korea}
\email{choe@yonsei.ac.kr}

\address{J. Wolf: Department of Mathematics,
Humboldt University Berlin,
Unter den Linden 6, 10099 Berlin,
Germany}
\email{jwolf@math.hu-berlin.de}

\address{M. Yang: Korea Institute for Advanced Study \\
Hoegiro 85, Dongdaemungu Seoul,
Republic of Korea}

\email{yangm@kias.re.kr}

\begin{abstract}
We study the partial regularity of suitable weak solutions to the three dimensional incompressible Navier--Stokes equations.
There have been several attempts to refine the Caffarelli--Kohn--Nirenberg criterion (1982).
We present an improved version of the CKN criterion with a direct method, which also provides the quantitative relation in Seregin's criterion (2007). 
\end{abstract}

\maketitle

\section{Introduction}
\label{S1}

We consider the Navier--Stokes equations
\begin{equation}
\label{E11}
\begin{split}
(\partial_t - \Delta) \bv + (\bv \cdot \nabla) \bv + \nabla p = \bbf \quad &\text{ in } \Omega\times(0,T) \\
\nabla \cdot \bv = 0 \quad &\text{ in } \Omega\times(0,T)
\end{split}
\end{equation}
where $\Omega \subset \R^3$ is a bounded domain with $C^2$ boundary and $T>0$.
The state variables $\bv$ and $p$ denote the velocity field of the fluid and its pressure. 
We complete the above equations by the following boundary and initial conditions
\begin{align*}
\bv &= 0 \quad \text{ on } \partial\Omega\times(0,T) \\
\bv &= \bv_0 \quad \text{in } \Omega\times\set{0}
\end{align*}
where the initial velocity $\bv_0$ is sufficiently regular. 
Throughout this paper, we assume that $(\bv,p)$ is a suitable weak solution to this problem and the definition will be given in the next section.

There are a huge number of important papers that contribute to the regularity problem of suitable weak solutions to the Navier--Stokes equations and there are many good survey papers and books.
So, we only mention a few of them.
Scheffer \cite{Sc76,Sc77} introduced partial regularity for the Navier--Stokes system.
Caffarelli, Kohn and Nirenberg \cite{CKN82} further strengthened Scheffer's results.
Lin \cite{Li98} gave a new short proof by an indirect argument.
Neustupa \cite{Ne99} and Ladyzhenskaya and Seregin \cite{LS99} investigated partial regularity. 
Choe and Lewis \cite{CL00} studied singular set by using a generalized Hausdorff measure.
Escauriaza, Seregin, and \v{S}ver\'ak \cite{ESS03} proved the marginal case of the so-called Ladyzhenskaya--Prodi--Serrin condition based on the unique continuation theory for parabolic equations.
Gustafson, Kang, and Tsai \cite{GKT07} generalize several previously known criteria.

Among the many important regularity conditions, the following criterion plays an important role because it gives better information about the possible singular points: 
There exists an absolute positive constant $\epsilon$ such that $\z = (\x,t) \in \Omega\times(0,T)$ is a regular point if
\begin{equation}
\label{E13}
\limsup_{r\to0} r^{-1} \iint_{Q(\z,r)} |\nabla \bv|^2 d\y ds < \epsilon
\end{equation}
where $Q(\z,r)$ denotes the parabolic cylinder $B(\x,r) \times (t-r^2,t) \subset \R^3 \times \R$.

There have been several attempts to refine this criterion.
In particular, Seregin \cite{Se07} weaken the above condition as follows:
for each $0<M<\infty$ there exists a positive number $\epsilon(M)$ such that $\z \in \Omega\times(0,T)$ is a regular point if 
\begin{equation}
\label{E15}
\begin{split}
&\limsup_{r\to0} r^{-1} \iint_{Q(\z,r)} |\nabla \bv|^2 d\y ds \le M \\
&\liminf_{r\to0} r^{-1} \iint_{Q(\z,r)} |\nabla \bv|^2 d\y ds < \epsilon(M).
\end{split}
\end{equation}
The proof was done by an indirect argument, which has been widely used as an effective way to prove such kind of regularity theorems in the field of nonlinear PDEs.
The proof goes as follows.
If the theorem is false, then there should exist a sequence of suitable solutions such that the scaled quantity 
\[
r^{-1} \iint_{Q(\z,r)} |\nabla \bv_n|^2 d\y ds
\]
tends to zero on a fixed particular cylinder centered at a singular point $\z$.
One can show that the uniform boundedness occurs to ensure a compactness lemma and its sub-sequential limit must be regular enough at the point $\z$, wihch gives a contradiction to the fact that $\z$ is a singular point.
By this argument one can know the theorem is true so that $\epsilon(M)$ should exist.
However, the argument does not provide any specific information about $\epsilon(M)$, even the quantitative dependence on $M$ is unclear.

In this paper, we shall give a new refined local regularity criterion of suitable weak solutions to the Navier--Stokes system with a direct iteration method so that our theorem shows a reverse relation between $M$ and $\epsilon(M)$ and gives at least a quantitative upper bound of $\epsilon(M)$ in terms of $M$.
For simplicity we use the following notation.

\begin{defn}
For $9/5 \le q \le 2$, we define 
\[E_q(\z,r) = r^{-5+2q} \iint_{Q(\z,r)} |\nabla \bv|^q d\y ds\]
and denote 
\[\overline{E}_q(\z) = \limsup_{r\to0} E_q(\z,r) \quad \text{ and } \quad 
\underline{E}_q(\z) = \liminf_{r\to0} E_q(\z,r)\]
We omit the subscript $q$ when $q=2$.
\end{defn}

Here are our main results.

\begin{thm}
\label{T1}
Let $9/5 \le q < 2$ and $\bbf=0$.
There exists a positive number $\epsilon$ such that $\z \in \Omega\times(0,T)$ is a regular point if 
\[\overline{E}_q(\z)^{(5-q)/(q-1)} \underline{E}_q(\z) < \epsilon.\]
\end{thm}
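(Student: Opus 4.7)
The plan is a direct, quantitative iteration on a scale-invariant master quantity, making Seregin's implicit constant $\epsilon(M)$ explicit as $\epsilon/M^{(5-q)/(q-1)}$. The strategy is to establish a one-step decay inequality for $E_q(\z, \cdot)$ in which data from larger cylinders enter only through the bound $\overline{E}_q(\z)$, and then iterate it along a sequence of scales realizing the liminf.

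The first ingredient is a local treatment of the pressure. Following J.\ Wolf's local pressure projection, on each parabolic cylinder $Q(\z,r)$ decompose $p = p_h + p_v$, where $p_h$ is harmonic in $B(\x,r)$ and $p_v$ solves a local Stokes-type problem with source derived from $\bv \otimes \bv$. The harmonic piece $p_h$ enjoys sharp interior decay for its oscillation, while $p_v$ obeys a Calder\'on--Zygmund bound $\|p_v\|_{L^{q/2}} \lesssim \|\bv\|_{L^q}^2$, which after parabolic Sobolev embedding is controlled by powers of $E_q(\z,r)$. This removes the nonlocal character of $p$.

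Testing the weak form of \eqref{E11} against a cut-off of $\bv - (\bv)_{Q(\z,r)}$, using the pressure decomposition, and invoking parabolic Poincar\'e--Sobolev, yields a Caccioppoli-type decay estimate of the schematic form
\begin{equation*}
E_q(\z, \theta r) \le C\theta\, E_q(\z, r) + C\theta^{-\lambda}\, \overline{E}_q(\z)\, E_q(\z, r)^{1 + (q-1)/(5-q)},
\end{equation*}
valid for all $\theta \in (0, 1/2)$ and all $r \le r_0$. The factor $\overline{E}_q(\z)$ in the error term records the unavoidable dependence on velocity data inherited from a larger cylinder (through the convective term and the harmonic pressure part), while the super-linear exponent $(q-1)/(5-q)$ is forced by the parabolic Sobolev index; the range $q \ge 9/5$ is precisely where this balance closes.

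To conclude, select $r_n \downarrow 0$ with $E_q(\z, r_n) \le 2\underline{E}_q(\z)$ and choose $\theta$ so that $C\theta \le 1/4$. The decay estimate then yields $E_q(\z, \theta r) \le E_q(\z, r)/2$ whenever $\overline{E}_q(\z)^{(5-q)/(q-1)} E_q(\z, r)$ is below an absolute constant, which under the hypothesis holds at $r_n$ and, by the very geometric decay it triggers, persists at every subsequent scale $\theta^k r_n$. Hence $E_q(\z, \theta^k r_n) \to 0$; a further Caccioppoli step upgrades this to smallness of $E(\z, \rho)$ at some sufficiently small $\rho$, at which point the classical criterion \eqref{E13} yields regularity at $\z$. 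The main technical obstacle is producing the decay inequality with exactly the stated super-linear exponent and the correct linear dependence on $\overline{E}_q$: the pressure decomposition, the bilinear convective term, and the Sobolev embedding must be balanced with no loss, a tight book-keeping that becomes infeasible once $q < 9/5$.
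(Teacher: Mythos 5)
Your scheme hinges on a one--step decay inequality for the dissipation quantity itself,
\begin{equation*}
E_q(\z,\theta r) \le C\theta\, E_q(\z,r) + C\theta^{-\lambda}\, \overline{E}_q(\z)\, E_q(\z,r)^{1+(q-1)/(5-q)},
\end{equation*}
and this is where the proposal breaks down. Testing the weak formulation against a cut-off of $\bv-\inn{\bv}_{Q}$ produces a Caccioppoli/local energy inequality, which bounds $A(\rho)+E(\rho)$ by the cubic term $C$ and the pressure term $D$ (or $G$ and $P$) at a \emph{larger} scale; it supplies no mechanism for a small factor $\theta$ in front of $E_q(r)$. Decay factors of that kind come either from a comparison/compactness argument (the indirect route this paper is explicitly avoiding) or, as in the paper, from structures specific to $C$ and $D$: subtracting the spatial mean for $C$ and the mean value property of the harmonic pressure component for $D$ (Lemmas \ref{L41} and \ref{L51}). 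There is no analogous mechanism for the scaled Dirichlet integral $E_q$, and parabolic Poincar\'e--Sobolev cannot manufacture one. Your exponents are reverse-engineered to be consistent with the theorem's hypothesis, which is a fine sanity check, but the inequality itself is asserted, not derived; if it were available with a linear factor $\overline{E}_q$, the hard part of the problem would simply disappear.

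The second, related omission is that you never control the local kinetic energy $A(r)=r^{-1}\sup_s\int_{B(r)}|\bv|^2$ and the pressure $D(r)$ uniformly in $r$. This is the actual content of the paper's Lemma \ref{L61}, proved by iterating $Y=A+C+D$ using the local energy inequality, the pressure decay, and the interpolation inequality, and it yields $A(r)+D(r)\lesssim \overline{E}_q^{\,2/(q-1)}$ --- a \emph{superlinear} power of $\overline{E}_q$, not the linear factor your error term posits. That power is precisely where the exponent $(5-q)/(q-1)$ in the hypothesis originates: the paper then makes a single application of the interpolation bound $C(\theta r_n)\lesssim \theta^3A(r_n)^{3/2}+\theta^{-2}A(r_n)^{(3-q)/2}E_q(r_n)$ along the liminf sequence, optimizes $\theta=(M^{-q/(q-1)}m)^{1/5}$, and obtains smallness of $C$ at one scale, whence $\epsilon$-regularity. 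No iteration on $E_q$ is performed or needed. To repair your argument you would have to either prove the asserted decay of $E_q$ (which I do not believe is possible by your stated means) or restructure it around uniform bounds for $A$ and $D$ as the paper does.
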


The range $9/5 \le q \le 2$ is essential in view of our interpolation inequalities and the endpoint exponent $9/5$ is important when one deals with a reverse H\"older-type inequality.
But, the restriction $\bbf=0$ is inessential.
Actually, under some mild integrability condition on $\bbf$, one can easily show that the contribution from $\bbf$ is small enough so that the theorem is still true for nonzero forces $\bbf$.

We have a further improvement when $q=2$.
In this case, we treat $\bbf \neq 0$ as an illustration how to control the nonzero forces.

\begin{thm}
\label{T2}
Let $\bbf \in L^{r}(\Omega _T)$ for some $r > 5/2$.
There exists a positive number $\epsilon$ such that $\z \in \Omega\times(0,T)$ is a regular point if 
\[\overline{E}(\z) \underline{E}(\z) < \epsilon.\]
\end{thm}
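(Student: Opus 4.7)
The plan is to mirror the direct iteration argument from Theorem~\ref{T1} at the endpoint $q=2$, where sharper interpolation is available, and to show that $\overline{E}(\z)\,\underline{E}(\z)<\epsilon$ (rather than the cubic product $\overline{E}(\z)^3\underline{E}(\z)$ that a naive specialization of Theorem~\ref{T1} would produce) already suffices to drive $E(\z,\cdot)$ below the threshold of the classical CKN criterion \eqref{E13}. The force $\bbf\in L^r$ with $r>5/2$ is subcritical for the natural parabolic scaling, so its contribution at scale $r$ is $O(r^\sigma)$ with $\sigma>0$; up to this harmless perturbation the core estimates are those for the homogeneous system.

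\textbf{Setup and one-scale decay.} First, fix $\z$ and work in a parabolic cylinder $Q(\z,R)$ where the local energy inequality for suitable weak solutions holds. Use a local pressure projection (splitting $p$ into a harmonic part, a nonlinear part generated by $\nabla\otimes(\bv\otimes\bv)$, and a part generated by $\bbf$) so that only local norms enter the pressure estimates. The technical heart is then to derive a Campanato-type one-step decay: there exist $\theta\in(0,1/2)$ and constants $C,\sigma>0$ such that
\[
E(\z,\theta r) \leq C\theta\, E(\z,r) + C\, \overline{E}(\z)\, E(\z,r) + C\, E(\z,r)^{3/2} + C\, r^\sigma
\]
for all sufficiently small $r$. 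The decisive feature distinguishing $q=2$ from $q<2$ is the \emph{linear} dependence on $\overline{E}(\z)$: once Gagliardo--Nirenberg and Sobolev embedding are combined with the pressure bound, one surplus factor of $\norm{\nabla\bv}_{L^2(Q(\z,r))}$ can be absorbed into the uniform $L^2$-gradient bound encoded by $\overline{E}(\z)$ with no interpolation penalty, whereas at $q<2$ the analogous step costs several H\"older exponents.

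\textbf{Iteration.} With the one-scale inequality in hand, pick a sequence $r_k\downarrow 0$ along which $E(\z,r_k)\to\underline{E}(\z)$, and note that for $k$ large one also has $E(\z,\rho)\leq 2\overline{E}(\z)$ for every $\rho\leq r_k$ by definition of the $\limsup$. Substituting yields
\[
E(\z,\theta r_k) \leq \bigl(C\theta + 2C\overline{E}(\z) + C\sqrt{2\underline{E}(\z)}\bigr)\cdot 2\underline{E}(\z) + Cr_k^\sigma .
\]
Since $\overline{E}(\z)\underline{E}(\z)<\epsilon$ together with $\underline{E}(\z)\leq\overline{E}(\z)$ forces $\underline{E}(\z)<\sqrt{\epsilon}$, each of the three terms on the right is bounded by a universal multiple of $\epsilon$ once $\theta$ has been fixed small. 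Choosing $\epsilon$ sufficiently small and $k$ sufficiently large drives the right-hand side below the CKN threshold of \eqref{E13}, which then delivers regularity at $\z$.

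\textbf{Main obstacle.} The delicate point is to secure the linear-in-$\overline{E}(\z)$ dependence in the one-scale inequality. This rests on a careful reverse-H\"older argument for the pressure at the endpoint exponent $9/5$ (the same exponent flagged after Theorem~\ref{T1}) combined with a sharp use of the local pressure projection; a cruder estimate would only recover the cubic exponent and erase the improvement over Theorem~\ref{T1} at $q=2$. Handling $\bbf$ is, by contrast, routine: the strict inequality $r>5/2$ is precisely what makes $\sigma$ strictly positive after parabolic rescaling.
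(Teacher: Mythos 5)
Your overall strategy (a priori bounds from the $\limsup$, a one-step decay estimate, evaluation along a sequence realizing the $\liminf$, and treating $\bbf$ as a subcritical perturbation) is the right one, but the proposal has two genuine gaps. The first is the asserted one-scale inequality
\[
E(\theta r) \le C\theta E(r) + C\,\overline{E}\,E(r) + C E(r)^{3/2} + C r^{\sigma},
\]
which is both unproved and, with only $E$ on the right-hand side and absolute constants, not obtainable. Any localization of the energy inequality unavoidably produces contributions from the local kinetic energy $A(r)$ and from the harmonic part of the pressure ($P(r)$ or $D(r)$), which cannot be rewritten in terms of $E(r)$ alone; the harmonic pressure at scale $r$ carries no local control by $\nabla\bv$. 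These must be handled by separate decay iterations showing $\limsup_{r\to0}A(r)\lesssim \overline{E}^{2}$ and $\limsup_{r\to0}P(r)\lesssim \overline{E}^{2}$ (the paper's Lemmas \ref{L61} and \ref{L62}), and once they enter, the one-step estimate necessarily contains a term of size $\theta^{2}\overline{E}^{2}$ (from $\theta^2 A(r)+\theta^2 P(r)$). That term is \emph{not} small for a $\theta$ ``fixed small'' independently of $\overline{E}$: the scale ratio must be chosen as $\theta\sim \epsilon^{1/2}\overline{E}^{-1}$, and it is exactly this $M$-dependent choice, combined with the product structure $\theta A(r)E(r)\lesssim \theta \overline{E}^{2}\underline{E}$ and $\theta^{-1}E(r_n)\to\theta^{-1}\underline{E}$, that makes every term a power of $\epsilon$ when $\overline{E}\,\underline{E}<\epsilon$. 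Your step ``once $\theta$ has been fixed small'' erases precisely the balancing act that yields the bilinear condition; the vague claim that at $q=2$ ``one surplus factor of $\norm{\nabla\bv}_{L^2}$ can be absorbed with no interpolation penalty'' does not substitute for it. (The actual source of the improvement at $q=2$ in the paper is the second local energy inequality with the $[1+E(2r)]\,G(2r)$ structure together with $G(2\theta r)\lesssim \theta^{-1}E(r)+\theta^{2}A(r)$, not a reverse H\"older argument at exponent $9/5$.)

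The second gap is the conclusion. Smallness of $E$ alone at a single scale $\theta r_k$, or along a sequence of scales, does not trigger \eqref{E13}, which is a $\limsup$ condition over \emph{all} small radii; and your one-step inequality cannot be iterated down to all scales because the coefficient $C\,\overline{E}$ in front of $E(r)$ may exceed $1$. You need a genuine single-scale $\epsilon$-regularity criterion. The paper closes by producing smallness of $A(\theta r_n)+E(\theta r_n)$ (not merely $E$) and invoking the one-scale criterion of Wolf together with the interpolation $C(r)^{2/3}\lesssim A(r)+E(r)$ (Lemma \ref{L55}); your argument, which never controls $A$ at the final scale, cannot reach that criterion. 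Both gaps are repairable, but only by importing essentially the full machinery of Sections \ref{S3}--\ref{S6}.
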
   

This is a quantitative version of \eqref{E15}: the point $\z \in \Omega\times(0,T)$ is regular if 
\[
\underline{E}(\z) < \frac{\epsilon}{M}. 
\]

\begin{rem}
We shall define several scaled functionals and give various relations among them. 
However, the estimates of those functionals in this paper will not depend on the reference point $\z$.
So, we shall assume $\z=(0,0)$ and $Q(z,2) \subset \Omega\times(-8,8)$ for notational convenience.
From now, we suppress $\z$.
\end{rem}

\section{Preliminaries}
\label{S2}

We denote by $L^p(\Omega)$ and $W^{k,p}(\Omega)$ the standard Lebesgue and Sobolev spaces and
we use the boldface letters for the space of vector or tensor fields.
We denote by $\cD_\sigma(\Omega)$ the set of all solenoidal vector fields $\phi \in C_c^\infty(\Omega)$.
We define $\bL_\sigma^2(\Omega)$ to be the closure of $\cD_\sigma(\Omega)$ in $\bL^2(\Omega)$ and $\bW_\sigma^{1,2}(\Omega)$ to be the closure of $\cD_\sigma(\Omega)$ in $\bW^{1,2}(\Omega)$.

\begin{defn}[suitable weak solutions]
Let $\Omega_T=\Omega\times(0,T)$.
Suppose that $\bbf \in L^p(\Omega_T)$ for some $p>5/2$.
We say that $(\bv, p)$ is a suitable weak solution to \eqref{E11} if 
\[\bv \in L^\infty(0,T; \bL_\sigma^2(\Omega)) \cap L^2(0,T; \bW_\sigma^{1,2}(\Omega)), \qquad p \in L^{3/2}(\Omega_T),
\]
and $(\bv, p)$ solves the Navier--Stokes equations in $\Omega_T$ in the sense of distributions and satisfies the generalized energy inequality
\begin{equation}
\label{E21}
\begin{split}
&\int_{\Omega} |\bv(t)|^2 \phi(t) d\x + 2 \int_0^t \int_{\Omega} |\nabla \bv|^2 \phi d\x ds \\
&\le \int_0^t \int_{\Omega} |\bv|^2 (\partial_t \phi + \Delta \phi) d\x ds 
+ \int_0^t \int_{\Omega} |\bv|^2 \bv \cdot \nabla \phi d\x ds \\
&\quad + 2\int_0^t \int_{\Omega} p \bv \cdot \nabla \phi d\x ds 
+ 2\int_0^t \int_{\Omega} \bbf \cdot \bv \phi d\x ds 
\end{split}
\end{equation}
for almost all $t \in (0,T)$ and for all nonnegative $\phi \in C_c^\infty(\Omega_T)$.
\end{defn}

Throughout the paper, we use the following notation.

\begin{nota}
We denote the average value of $g$ over the set $E$ by 
\[\inn{g}_E = \fint_E g d\mu = \mu(E)^{-1} \int_E g d\mu.\]
We denote $A \lesssim B$ if there exists a generic positive constant $C$ such that $|A| \le CB$.
\end{nota}

\section{Local energy inequalities}
\label{S3}

We shall define several scaled functionals to describe neatly various relations among them.
The aim of this section is to present local Caccioppoli-type inequalities.

\begin{defn}
[scaled functionals I]
Let 
\begin{align*}
A(r) &= r^{-1} \sup_{t-r^2<s<t} \int_{B(\x,r)} |\bv|^2 d\y \\
C(r) &= r^{-2} \iint_{Q(r)} |\bv|^3 d\y ds\\
D(r) &= r^{-2} \iint_{Q(r)} |p-\inn{p}_{B(r)}|^{3/2} d\y ds
\end{align*}
where $\inn{p}_{B(r)} = \fint_{B(r)} p d\y$.
\end{defn}

From the definition of suitable weak solution we get the next lemma.
Indeed, it is a direct consequence of the inequality \eqref{E21} with a standard cutoff function $\phi$, so we omit its proof.

\begin{lem}
[local energy inequality I]
\label{L31}
For $0 < r \le 1$
\[A(r) + E(r) \lesssim C(2r)^{2/3} + C(2r) + C(2r)^{1/3} D(2r)^{2/3}.\]
\end{lem}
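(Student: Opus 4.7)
The plan is to substitute a standard cutoff into the generalized energy inequality \eqref{E21} and bound each of the three resulting integrals on the right-hand side by Hölder's inequality, so that the stated estimate emerges from the definitions of the scaled functionals $A$, $E$, $C$, $D$.

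Concretely, I would take $\phi(\y,s)=\eta(\y)^2\chi(s)$, with $\eta\in C_c^\infty(B(2r))$ equal to $1$ on $B(r)$ (so that $|\nabla\eta|\lesssim r^{-1}$ and $|\Delta\eta|\lesssim r^{-2}$) and $\chi\in C_c^\infty(\R)$ equal to $1$ on $[-r^2,0]$, supported in a slightly larger interval, with $|\chi'|\lesssim r^{-2}$. Because $\bv(\cdot,s)$ is weakly divergence-free and $\nabla\eta^2$ has compact spatial support, $\int_\Omega\bv(\y,s)\cdot\nabla\eta(\y)^2\,d\y=0$ for a.e. $s$, so the pressure in \eqref{E21} may be replaced by $p-\inn{p}_{B(2r)}$. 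Inserting this $\phi$ into \eqref{E21}, using the size bounds for $\eta$ and $\chi$, and restricting the spatial support to $B(2r)$ on the right yields, for a.e. $s\in(-r^2,0)$,
\begin{align*}
\int_{B(r)}|\bv(s)|^2 + 2\int_{-r^2}^{s}\!\int_{B(r)}|\nabla\bv|^2
&\lesssim r^{-2}\iint_{Q(2r)}|\bv|^2 + r^{-1}\iint_{Q(2r)}|\bv|^3 \\
&\quad + r^{-1}\iint_{Q(2r)}|p-\inn{p}_{B(2r)}|\,|\bv|.
\end{align*}
Both terms on the left are nonnegative, so each is separately bounded by the right-hand side; taking the essential supremum over $s$ in the first and letting $s\uparrow 0$ in the second produces $rA(r)+rE(r)$.

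For the three terms on the right, Hölder's inequality with $|Q(2r)|\simeq r^5$ gives
\[
r^{-2}\iint_{Q(2r)}|\bv|^2 \lesssim r^{-2}\,|Q(2r)|^{1/3}\Big(r^2 C(2r)\Big)^{2/3}\lesssim r\,C(2r)^{2/3},
\]
while $r^{-1}\iint_{Q(2r)}|\bv|^3 = r\,C(2r)$ by definition, and Hölder with exponents $(3/2,3)$ applied to the pressure term yields $r\,C(2r)^{1/3}D(2r)^{2/3}$. Dividing through by $r$ produces the asserted inequality. The only step that is not pure bookkeeping is the subtraction of $\inn{p}_{B(2r)}$ from $p$, which relies on the divergence-free condition together with the compact spatial support of $\nabla\phi$; everything else follows from standard Hölder estimates and the scaling of the functionals, so I do not expect any genuine obstacle.
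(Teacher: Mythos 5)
Your proof is correct and is precisely the standard argument the paper has in mind: the authors omit the proof of Lemma \ref{L31}, describing it as a direct consequence of \eqref{E21} with a standard cutoff, and your choice of $\phi=\eta^2\chi$, the use of $\nabla\cdot\bv=0$ to replace $p$ by $p-\inn{p}_{B(2r)}$, and the H\"older/scaling bookkeeping all check out. No gaps.
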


In terms of the following scaled functionals, we shall derive another version of a local  Caccioppoli-type inequality.

\begin{defn}
[scaled functionals II]
Let 
\begin{align*}
G(r) &= r^{-1} \int_{-r^2}^0 \Big(\int_{B(r)} |\bv|^6 d\y\Big)^{1/3} ds \\
P(r) &= r^{-2} \inf_{c\in\R} \left(\int_{-r^2}^0 \Big(\int_{B(r)} |p-c|^3 d\y\Big)^{1/3} ds\right)^2.
\end{align*}
\end{defn}

\begin{lem}
[local energy inequality II]
\label{L32}
For $0 < r \le 1$
\[A(r) + E(r) \lesssim [1+E(2r)] G(2r) + P(2r).\]
\end{lem}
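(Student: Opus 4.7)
The plan is to test the generalized energy inequality \eqref{E21} with a standard parabolic cutoff and estimate each of the four resulting terms. I would take $\phi \in C_c^\infty(\Omega\times\R)$ supported in $Q(\rho_2)$ with $\phi \equiv 1$ on $Q(\rho_1)$, where $r \le \rho_1 < \rho_2 \le 2r$ are auxiliary radii, and with the standard bounds $|\nabla\phi| \lesssim (\rho_2-\rho_1)^{-1}$ and $|\partial_t\phi| + |\Delta\phi| \lesssim (\rho_2-\rho_1)^{-2}$. Since $\nabla\cdot\bv = 0$, in the pressure term I may replace $p$ by $p - c(s)$ for any function $c(s)$ of time alone, and I take $c(s)$ to be the near-infimizer for $P(2r)$.

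For the diffusion term, H\"older in space $\int_{B(2r)}|\bv|^2 \le |B(2r)|^{2/3}\|\bv\|_{L^6(B(2r))}^2 \lesssim r^2\|\bv\|_{L^6}^2$ gives a contribution of order $(r/(\rho_2-\rho_1))^2 G(2r)$ to $A(\rho_1) + E(\rho_1)$. For the pressure term, H\"older in space together with $\|\bv\|_{L^{3/2}(B(2r))} \lesssim r^{1/2}\|\bv\|_{L^2(B(2r))}$ and $\sup_s$ in time on $\|\bv\|_{L^2}$ gives $\iint |p-c||\bv| \lesssim r^2 A(2r)^{1/2} P(2r)^{1/2}$ via the identity $\int \|p-c\|_{L^3(B(2r))}\,ds = 2rP(2r)^{1/2}$; combined with the $(\rho_2-\rho_1)^{-1}$ from $|\nabla\phi|$ and Young's inequality, this contributes $\le \delta A(2r) + C_\delta (r/(\rho_2-\rho_1))^2 P(2r)$.

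The cubic term is the key step. I integrate by parts via $\nabla\cdot\bv = 0$ to rewrite
\[
\iint |\bv|^2\bv\cdot\nabla\phi\,d\y\,ds = -2\iint ((\bv\cdot\nabla)\bv)\cdot\bv\,\phi\,d\y\,ds,
\]
which is absolutely bounded by $2\iint |\bv|^2|\nabla\bv|\phi$. I then decompose $\bv = (\bv - \inn{\bv}_{B(2r)}) + \inn{\bv}_{B(2r)}$ in the $|\bv|^2$ factor. The mean piece is controlled via the pointwise bound $|\inn{\bv(s)}_{B(2r)}| \lesssim r^{-3/2}\|\bv(s)\|_{L^2}$ combined with the spatial H\"older estimate $\int_{B(2r)} |\bv||\nabla\bv| \le r\|\bv\|_{L^6}\|\nabla\bv\|_{L^2}$ and Cauchy--Schwarz in time against the $L^2_tL^6_x$ and $L^2_{tx}$ norms, which together give $\iint |\bv||\nabla\bv|\phi \lesssim r^2(E(2r)G(2r))^{1/2}$; this yields a contribution of order $rA(2r)^{1/2}(E(2r)G(2r))^{1/2}$. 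The oscillatory piece is handled analogously using the Sobolev--Poincar\'e inequality $\|\bv - \inn{\bv}\|_{L^6(B(2r))} \lesssim \|\nabla\bv\|_{L^2(B(2r))}$. Dividing through by $\rho_1\sim r$ produces a contribution $\lesssim A(2r)^{1/2}(E(2r)G(2r))^{1/2}$, and Young's inequality splits this as $\delta A(2r) + C_\delta E(2r)G(2r)$.

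Combining all four estimates yields
\[
A(\rho_1) + E(\rho_1) \le C'\delta A(\rho_2) + C'_\delta\Big[(r/(\rho_2-\rho_1))^2(G(2r) + P(2r)) + E(2r)G(2r)\Big],
\]
and a standard geometric iteration over $\rho_k = 2r - r2^{-k}$, with $\delta$ chosen small enough that the weighted geometric series $\sum (C'\delta \cdot 4)^k$ converges, absorbs the residual $A(\rho_k)$ term and produces the stated inequality. The main technical hurdle is the cubic estimate: the naive H\"older bound $\iint|\bv|^3 \lesssim r^2 A(2r)^{3/4}G(2r)^{3/4}$ gives only $G(2r)^3$ after Young, rather than the sharper factor $E(2r)G(2r)$ in the target. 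Producing the latter requires the integration by parts (which introduces $|\nabla\bv|$ and hence $E(2r)^{1/2}$) together with the mean/oscillation decomposition and Cauchy--Schwarz in time so that $\|\bv\|_{L^6}$ couples cleanly against $\|\nabla\bv\|_{L^2}$ rather than against itself.
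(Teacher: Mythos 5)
Your overall skeleton is sound (cutoff test function, the cubic term is the crux, and you correctly diagnose that the naive H\"older bound $\iint|\bv|^3\lesssim r^2A^{3/4}G^{3/4}$ cannot produce the factor $E(2r)G(2r)$), but the fix you propose for the cubic term has a genuine gap. After integrating by parts you must estimate $\iint |\bv|^2|\nabla\bv|\phi$, and the oscillatory piece of your decomposition, $\iint|\bv-\inn{\bv}_{B(2r)}|\,|\bv|\,|\nabla\bv|\,\phi$, does not close. Count the factors that are only square-integrable in time: $|\nabla\bv|$ contributes $\|\nabla\bv\|_{L^2_x}\in L^2_t$, Sobolev--Poincar\'e turns $\|\bv-\inn{\bv}\|_{L^6_x}$ into a second copy of $\|\nabla\bv\|_{L^2_x}\in L^2_t$, and the remaining $\|\bv\|_{L^6_x}$ is again only in $L^2_t$ (that is all that $G$ gives). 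You end up needing $\int \|\nabla\bv\|_{L^2}^2\|\bv\|_{L^6}\,ds$ or $\int\|\nabla\bv\|_{L^2}^2\|\bv\|_{L^6}^{1/2}\|\bv\|_{L^2}^{1/2}ds$, and no H\"older split of three such factors against $E$, $G$ and $\sup_s\|\bv\|_{L^2}$ is possible: Cauchy--Schwarz in time can absorb exactly two $L^2_t$ factors, and the third would have to be bounded in $L^\infty_t$, which only $\|\bv\|_{L^2_x}$ is. (Your mean piece is fine precisely because $|\inn{\bv}|$ \emph{is} controlled in $L^\infty_t$ by $A^{1/2}$, leaving only the two factors $\|\bv\|_{L^6}\|\nabla\bv\|_{L^2}$.) The integration by parts is therefore a step backwards: it trades the harmless factor $\bv\cdot\nabla\phi$ for an extra $|\nabla\bv|$, which is exactly one $L^2_t$ factor too many.

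The paper's resolution keeps the term in the form $\iint|\bv|^2\,\bv\phi\cdot\nabla\phi$ and uses $\nabla\cdot\bv=0$ only to insert the spatial constant: since $\iint|\inn{\bv}_{B(2r)}|^2\,\bv\cdot\nabla(\phi^2)=0$, one may replace $|\bv|^2$ by $|\bv|^2-|\inn{\bv}_{B(2r)}|^2=(\bv-\inn{\bv})\cdot(\bv+\inn{\bv})$. H\"older in space with exponents $6,6,2$ then yields, per time slice, exactly one factor $\|\bv-\inn{\bv}\|_{L^6}\lesssim\|\nabla\bv\|_{L^2}$, one factor $\|\bv+\inn{\bv}\|_{L^6}\lesssim\|\bv\|_{L^6}$, and a third factor $\bigl(\int|\bv|^2\phi^2\bigr)^{1/2}$ which is pulled out as a supremum in time and absorbed into the left-hand side by Young's inequality (it is the \emph{same} cutoff quantity that appears on the left, so no iteration over intermediate radii is needed); Cauchy--Schwarz in time on the remaining product gives $E(2r)^{1/2}G(2r)^{1/2}$ and Young gives $E(2r)G(2r)$. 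Your treatment of the diffusion and pressure terms, and the radius iteration machinery, are correct but the latter is unnecessary once the absorption is done against $\sup_s\int|\bv|^2\phi^2$ rather than against $A(\rho_2)$.
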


\begin{proof}
First, we fix $\phi \in C^{\infty}_{\rm c}(\Omega_T)$ satisfying $0 \le \phi \le 1$ in $\R^3$, 
$\phi \equiv 1$ on $Q(r)$, $\phi \equiv 0$ in $\R^3 \times (-\infty,0) \setminus Q(2r)^c$ and
\[
|\partial_t \phi| + |\nabla^2\phi| + |\nabla \phi|^2 \lesssim r^{-2}.
\]
Then, by the definition of the suitable weak solution, we have 
\begin{equation}
\label{E31}
\begin{split}
&\int |\bv(t)|^2 \phi^2 dy
+ \iint |\nabla \bv|^2 \phi^2 d\y ds \\
&\lesssim \iint |\bv|^2 (\partial_t \phi^2 + \Delta \phi^2) d\y ds
+ \iint |\bv|^2 \bv \phi \cdot \nabla \phi d\y ds \\
&\quad + 2 \iint p \bv \phi \cdot \nabla \phi d\y ds \\ 
&=: I + II + III.
\end{split}
\end{equation}
We shall estimate each term on the right. 
By the Jensen inequality 
\begin{equation}
\label{E32}
\begin{split}
I
&= r^{-2} \iint |\bv|^2 d\y ds 
\lesssim r \int_{-4r^2}^0 \fint_{B(2r)} |\bv|^2 d\y ds \\
&\lesssim r \int_{-4r^2}^0 \Big(\fint_{B(2r)} |\bv|^6 d\y\Big)^{1/3} ds 
\lesssim r G(2r).
\end{split}
\end{equation}
Since $\nabla \cdot \bv=0$, we have 
\[
II = \iint (|\bv|^2-|\inn{\bv}_{B(2r)}|^2) \bv \phi \cdot \nabla \phi d\y ds.
\]
Using the H\"older inequality and then applying the Sobolev--Poincar\'e inequality, we obtain that 
\begin{align*}
II
&\lesssim r^{-1} \iint |\bv - \inn{\bv}_{B(2r)}| |\bv + \inn{\bv}_{B(2r)}| |\bv| \phi d\y ds \\
&\lesssim r^{-1/2} \int_{-4r^2}^0 
\Big(\int_{B(2r)} |\bv - \inn{\bv}_{B(2r)}|^6 d\y\Big)^{1/6} \\
&\qquad \qquad \qquad \qquad \times \Big(\int_{B(2r)} |\bv + \inn{\bv}_{B(2r)}|^6 d\y\Big)^{1/6} 
\Big(\int |\bv|^2 \phi^2 d\y\Big)^{1/2} ds \\
&\lesssim r^{-1/2} \sup_s \Big(\int |\bv|^2 \phi^2 d\y\Big)^{1/2} \int_{-4r^2}^0 \Big(\int_{B(2r)} |\nabla\bv|^2 d\y\Big)^{1/2} \Big(\int_{B(2r)} |\bv|^6 d\y\Big)^{1/6} ds \\
&\lesssim r^{1/2} \sup_s \Big(\int |\bv|^2 \phi^2 d\y\Big)^{1/2} E(2r)^{1/2} G(2r)^{1/2}.
\end{align*}
By the Young inequality we have for some $C>0$ for all $\delta>0$
\begin{equation}
\label{E33}
II \le \delta \sup_s \int |\bv|^2 \phi^2 d\y + \frac{Cr}{4\delta} E(2r) G(2r).
\end{equation}
H\"older's inequality gives 
\begin{align*}
III 
&= \iint p \bv \phi \cdot \nabla \phi d\y ds 
\lesssim r^{-1} \iint |p-c| |\bv| \phi d\y ds \\
&\lesssim r^{-1/2} \int_{-4r^2}^0 \Big(\int_{B(2r)} |p-c|^3 d\y\Big)^{1/3} \Big(\int |\bv|^2 \phi^2 d\y\Big)^{1/2} ds \\
&\lesssim r^{1/2} \sup_s \Big(\int |\bv|^2 \phi^2 d\y\Big)^{1/2} P(2r)^{1/2}.
\end{align*}
By the Young inequality we have for some $C>0$ for all $\delta>0$
\begin{equation}
\label{E34}
III \le \delta \sup_s \int |\bv|^2 \phi^2 d\y + \frac{Cr}{4\delta} P(2r).
\end{equation}
Combining \eqref{E31}--\eqref{E34} with a fixed small number $\delta$, we get the result.

\end{proof}

\begin{rem}
\label{R31}
If $\bbf \neq 0$, then we have 
for $0 < r \le 1$
\[A(r) + E(r) \lesssim [1+E(2r)] G(2r) + P(2r) + F(2r)\]
where 
\[F(r) = \left(\int_{-r^2}^0 \Big(\int_{B(r)} |\bbf|^2 d\y\Big)^{2/3} ds\right)^{3/2}.\]
Indeed, H\"older's inequality gives 
\begin{align*}
\iint \bbf \cdot \bv \phi^2 d\y ds 
&\lesssim \int_{-4r^2}^0 \Big(\int_{B(2r)} |\bbf|^2 d\y\Big)^{1/2} \Big(\int |\bv|^2 \phi^2 d\y\Big)^{1/2} ds \\
&\lesssim \sup_s \Big(\int |\bv|^2 \phi^2 d\y\Big)^{1/2} \int_{-4r^2}^0 \Big(\int_{B(2r)} |\bbf|^2 d\y\Big)^{1/2} ds \\
&\lesssim r^{1/2} \sup_s \Big(\int |\bv|^2 \phi^2 d\y\Big)^{1/2} F(2r)^{1/2}.
\end{align*}
By the Young inequality we have for some $C>0$ for all $\delta>0$
\[\iint \bbf \cdot \bv \phi^2 d\y ds 
\le \delta \sup_s \int |\bv|^2 \phi^2 d\y + \frac{Cr}{4\delta} F(2r).\]
As in the proof of the previous lemma, we can absorb the first term on the right by choosing small $\delta$.
We notice that $F(r)\to0$ as $r\to0$.
\end{rem}

\begin{rem}
The implied constants of the estimates in this section are all absolute.
\end{rem}

\section{Pressure inequalities}
\label{S4}

In this section we present pressure inequalities, Lemma \ref{L41} and Lemma \ref{L44}, which are used to complete iteration schemes.

\begin{lem}
[pressure inequality I]
\label{L41}
For $0 < r \le 1$ and $0 < \theta < 1/4$
\[D(\theta r) \lesssim \theta D(r) + \theta^{-2} \widetilde{C}(r).\]
\end{lem}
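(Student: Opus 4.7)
The plan is to perform a pressure decomposition at each time slice, exploiting the divergence-free condition on $\bv$ to replace $\bv\otimes\bv$ by the mean-subtracted tensor $(\bv-\bar\bv)\otimes(\bv-\bar\bv)$ in the Poisson equation for $p$, where $\bar\bv:=\inn{\bv}_{B(r)}$. Fix a cutoff $\eta\in C_c^\infty(B(r))$ with $\eta\equiv 1$ on $B(r/2)$, and for a.e.\ $s\in(-r^2,0)$ decompose $p(\cdot,s)=p_1(\cdot,s)+p_2(\cdot,s)$ on $B(r/2)$ via
\[
p_1(x,s)=\sum_{i,j}\partial_{x_i}\partial_{x_j}\int_{\R^3}\frac{1}{4\pi|x-y|}\,\eta(y)\,(v_i-\bar v_i)(v_j-\bar v_j)(y,s)\,dy.
\]
Since $\nabla\cdot\bv=0$ and $\bar\bv$ is $x$-independent, the identity $\partial_i\partial_j(v_iv_j)=\partial_i\partial_j((v_i-\bar v_i)(v_j-\bar v_j))$ holds distributionally, so together with $\eta\equiv 1$ on $B(r/2)$ this makes $p_2:=p-p_1$ harmonic on $B(r/2)$.

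For the harmonic part I would apply the interior Campanato estimate
\[
\int_{B(\theta r)}|p_2-\inn{p_2}_{B(\theta r)}|^{3/2}\,dy\lesssim\theta^{9/2}\int_{B(r/2)}|p_2-c|^{3/2}\,dy\qquad(\theta<1/4,\ c\in\R),
\]
which follows from the mean-value property and interior Lipschitz bounds for harmonic functions combined with Poincar\'e's inequality. Setting $c=\inn{p}_{B(r)}$ and writing $p_2-c=(p-c)-p_1$, the right-hand side is controlled by $\int_{B(r)}|p-\inn{p}_{B(r)}|^{3/2}\,dy+\|p_1\|_{L^{3/2}(\R^3)}^{3/2}$. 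For the non-harmonic part, the $L^{3/2}$-boundedness of iterated Riesz transforms yields
\[
\int_{\R^3}|p_1|^{3/2}\,dy\lesssim\int_{B(r)}|\bv-\bar\bv|^{3}\,dy,
\]
whose time integral over $(-r^2,0)$ is (up to the definition of $\widetilde{C}$) precisely $r^2\widetilde{C}(r)$.

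Integrating in $s$ over $(-(\theta r)^2,0)\subset(-r^2,0)$ and dividing by $(\theta r)^2$, the harmonic contribution to $D(\theta r)$ is bounded by $\theta^{5/2}\bigl(D(r)+\widetilde{C}(r)\bigr)$, while the direct contribution of $p_1$ (without the harmonic splitting) to $\int_{B(\theta r)}|p_1-\inn{p_1}_{B(\theta r)}|^{3/2}\,dy$ picks up the scaling factor $(\theta r)^{-2}\cdot r^2=\theta^{-2}$, giving $\theta^{-2}\widetilde{C}(r)$. Since $\theta<1/4$, both $\theta^{5/2}\le\theta$ and $\theta^{5/2}\widetilde{C}(r)\le\theta^{-2}\widetilde{C}(r)$, so one concludes $D(\theta r)\lesssim\theta D(r)+\theta^{-2}\widetilde{C}(r)$. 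The main technical point is the initial decomposition: one must verify at a.e.\ time slice that $p(\cdot,s)$ is a well-defined distributional solution of $-\Delta p=\partial_i\partial_j(v_iv_j)$ on $B(r/2)$ (modulo additive constants) and that the Riesz-transform formula produces an $L^{3/2}(\R^3)$ function $p_1$ whose difference with $p$ is weakly harmonic on $B(r/2)$. This is standard for suitable weak solutions but hinges on the weak formulation of the Navier--Stokes system and should be stated explicitly to justify the slicewise manipulations.
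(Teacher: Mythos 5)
Your proposal is correct and follows essentially the same route as the paper: a slicewise decomposition $p=p_1+p_2$ with $p_1$ the Calder\'on--Zygmund (Newtonian potential) solution for the cut-off, mean-subtracted tensor $(\bv-\inn{\bv}_{B(r)})\otimes(\bv-\inn{\bv}_{B(r)})$, and $p_2$ harmonic on the half ball estimated by interior regularity. The only cosmetic difference is that you estimate the oscillation of $p_2$ via a Campanato/gradient bound (gaining $\theta^{5/2}$) where the paper uses the cruder sup bound from the mean value property (gaining $\theta$); both suffice.
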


\begin{proof}
We may assume $r=1$.
In the sense of distributions we have 
\[-\Delta p = \partial_j\partial_k(\bv_j\bv_k).\] 
Let $\widetilde{\bv}=\bv-\inn{\bv}_{B(1)}$ and let $p_1$ satisfy the equation 
\[
-\Delta p_1 = \partial_j \partial_k (\widetilde{\bv}_j\widetilde{\bv}_k\phi)
\] 
where $\phi$ is a cutoff function which equals 1 in $Q(1/2)$ and vanishes outside of $Q(1)$.
By the Calderon--Zygmund inequality 
\begin{equation}
\label{E41}
\theta^{-2} \iint_{Q(\theta)} |p_1|^{3/2} d\y ds
\lesssim \theta^{-2} \widetilde{C}(r).
\end{equation}
Since $p_2 := p-p_1$ is harmonic in $B(1/2)$, we have by the mean value property 
\begin{equation}
\label{E42}
\begin{split}
\theta^{-2} \iint_{Q(\theta)} |p_2|^{3/2} d\y ds
&\lesssim \theta \iint_{Q(1/2)} |p_2|^{3/2} d\y ds \\
&\lesssim \theta D(1) + \theta \iint_{Q(1)} |p_1|^{3/2} d\y ds
\end{split}
\end{equation}
Since we have 
\[D(\theta) \lesssim \theta^{-2} \iint_{Q(\theta)} |p_1|^{3/2} + |p_2|^{3/2} d\y ds,\]
combining the two estimates \eqref{E41} and \eqref{E42} yields the result.

\end{proof}

Now, we recall a decomposition of Lebesgue spaces.

\begin{defn}
For $1< p < \infty$ define 
\begin{align*}
\cA^p(\Omega) &= \Set{\Delta v : v \in W^{2,\, p}_{ 0}(\Omega)}, \\
\cB^p(\Omega) &= \Set{ p_h \in L^p(\Omega)\cap C^\infty(\Omega) : \Delta p_h=0}.
\end{align*}
\end{defn}

\begin{lem}
\label{L42}
Let $1< p < \infty$ and $\Omega \subset \R^n$ be a bounded $C^2$-domain.
Then 
\[L^p(\Omega) = \cA^p(\Omega) \oplus \cB^p(\Omega).\]
\end{lem}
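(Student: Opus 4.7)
The plan is to establish the direct-sum decomposition in two steps: first show that $\cA^p(\Omega) \cap \cB^p(\Omega) = \{0\}$, and then show that every $f \in L^p(\Omega)$ can be written as $f = \Delta v + p_h$ with $v \in W^{2,p}_0(\Omega)$ and $p_h \in \cB^p(\Omega)$. Once these are proved, uniqueness of the splitting is automatic: given two decompositions $\Delta v_1 + p_{h,1} = \Delta v_2 + p_{h,2}$, the difference $\Delta(v_1 - v_2) = p_{h,2} - p_{h,1}$ lies in the intersection, hence equals zero.

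For the triviality of the intersection, suppose $f = \Delta v \in \cA^p(\Omega) \cap \cB^p(\Omega)$ with $v \in W^{2,p}_0(\Omega)$ and $\Delta f = 0$. Then $v$ solves the homogeneous biharmonic Dirichlet problem $\Delta^2 v = 0$ in $\Omega$ with $v = \partial_\nu v = 0$ on $\partial\Omega$, and uniqueness for this problem in $W^{2,p}_0(\Omega)$ on a bounded $C^2$ domain (classical Agmon--Douglis--Nirenberg theory) forces $v \equiv 0$, so $f = 0$.

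For the existence of a decomposition, given $f \in L^p(\Omega)$ I would extend $f$ by zero to $\R^n$ and let $u$ denote its Newtonian potential. By Calder\'on--Zygmund theory $u \in W^{2,p}_{\mathrm{loc}}(\R^n)$ with $\Delta u = f$ in $\Omega$, and in particular $u \in W^{2,p}(\Omega)$ with well-defined boundary traces. I would then solve the inhomogeneous biharmonic Dirichlet problem
\[
\Delta^2 w = 0 \text{ in } \Omega, \qquad w|_{\partial\Omega} = u|_{\partial\Omega}, \qquad \partial_\nu w|_{\partial\Omega} = \partial_\nu u|_{\partial\Omega},
\]
obtaining a unique $w \in W^{2,p}(\Omega)$ by the $L^p$ theory of the biharmonic operator on a $C^2$ domain, since the traces of $u$ lie in the appropriate fractional Sobolev spaces. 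Setting $v := u - w$ gives $v \in W^{2,p}_0(\Omega)$, while $p_h := \Delta w$ is in $L^p(\Omega)$ and distributionally harmonic, hence belongs to $\cB^p(\Omega)$ by Weyl's lemma. Thus $f = \Delta u = \Delta v + p_h$.

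The main technical obstacle is invoking the correct $L^p$ theory for the biharmonic operator on a $C^2$ domain with inhomogeneous Cauchy data. An alternative route that sidesteps the inhomogeneous problem is purely variational: find $v \in W^{2,p}_0(\Omega)$ satisfying
\[
\int_\Omega \Delta v \, \Delta \phi \, dx = \int_\Omega f \, \Delta \phi \, dx \qquad \forall\, \phi \in W^{2,p'}_0(\Omega),
\]
which is solvable because the biharmonic operator is an isomorphism between $W^{2,p}_0(\Omega)$ and the dual of $W^{2,p'}_0(\Omega)$; the resulting identity $\int (f - \Delta v)\, \Delta\phi\, dx = 0$ for all $\phi \in C^\infty_c(\Omega)$ forces $f - \Delta v$ to be distributionally harmonic, so $p_h := f - \Delta v \in \cB^p(\Omega)$ by Weyl's lemma, and the decomposition follows.
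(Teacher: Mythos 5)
The paper does not prove this lemma at all: it simply cites Simader's monograph \cite{Si72}, whose main theorem is precisely that for a strongly elliptic operator of order $2m$ on a bounded domain with suitable boundary regularity the map $W^{m,p}_0(\Omega)\to (W^{m,p'}_0(\Omega))^*$ is an isomorphism. Your second, ``variational'' route is therefore not merely an alternative --- it is essentially a reconstruction of the cited proof: once one grants that $\Delta^2\colon W^{2,p}_0(\Omega)\to (W^{2,p'}_0(\Omega))^*$ is an isomorphism (Simader's theorem for $m=2$, which is exactly where the $C^2$ hypothesis is used), surjectivity of the splitting follows from solving $\int_\Omega \Delta v\,\Delta\phi = \int_\Omega f\,\Delta\phi$ and applying Weyl's lemma to $f-\Delta v$, and triviality of the intersection follows from the injectivity half of the same isomorphism. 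That part of your argument is correct and complete modulo the quoted isomorphism. Your first route is weaker: extending $f$ by a Newtonian potential and solving the \emph{inhomogeneous} biharmonic Dirichlet problem with Cauchy data only in $W^{2-1/p,p}(\partial\Omega)\times W^{1-1/p,p}(\partial\Omega)$ is not covered by ``classical Agmon--Douglis--Nirenberg theory'' on a merely $C^2$ boundary (ADN for a fourth-order operator in the strong formulation wants substantially more boundary smoothness), and the same caveat applies to invoking ADN for uniqueness of the homogeneous problem in $W^{2,p}_0$ when $p\neq 2$. Since you flag this obstacle yourself and supply the variational argument that avoids it, the proof stands; I would simply drop the first route and present the variational one, citing Simader for the isomorphism.
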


\begin{proof}
The proof can be found in \cite{Si72}. 
\end{proof}

\begin{rem}
\label{R41}
Denote $L^p_0(\Omega) = \set{f \in L^p(\Omega) : \inn{f}_{\Omega}=0}$ and 
\[\cB^p_0(\Omega) = \cB^p(\Omega) \cap L^p_0(\Omega).\] 
Since $\cA^p(\Omega) \subset L^p_0(\Omega)$,  
Lemma \ref{L42} implies that 
\[
L^p_0(\Omega)= \cA^p(\Omega) \oplus \cB^p_0(\Omega). 
\]
\end{rem}

\begin{lem}
\label{L43}
For $1<s<\infty$ the operator $T_s: \cA^s(\Omega) \rightarrow W^{-2,s}(\Omega)$ defined by 
\[\inn{T_s p_0, v} = \int_{\Omega} p_0 \Delta v,\quad  v\in W^{2,s'}_0(\Omega).\]
is an isomorphism. 
\end{lem}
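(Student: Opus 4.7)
The plan is to verify the four properties of $T_s$ in turn: well-definedness/boundedness, injectivity, surjectivity, and continuity of the inverse. Boundedness is immediate from Hölder's inequality: for $p_0 \in \cA^s(\Omega)$ and $v \in W^{2,s'}_0(\Omega)$,
\[
|\inn{T_s p_0, v}| \le \|p_0\|_{L^s(\Omega)} \|\Delta v\|_{L^{s'}(\Omega)} \le \|p_0\|_{L^s(\Omega)} \|v\|_{W^{2,s'}_0(\Omega)},
\]
so $\|T_s\| \le 1$. For injectivity, if $T_s p_0 = 0$ then testing against $v \in C_c^\infty(\Omega) \subset W^{2,s'}_0(\Omega)$ gives $\Delta p_0 = 0$ in $\cD'(\Omega)$; interior regularity for the Laplacian then shows $p_0 \in C^\infty(\Omega)$, i.e., $p_0 \in \cB^s(\Omega)$. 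Combined with $p_0 \in \cA^s(\Omega)$, the direct sum $L^s = \cA^s \oplus \cB^s$ from Lemma \ref{L42} forces $\cA^s \cap \cB^s = \{0\}$, hence $p_0 = 0$.

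The surjectivity step is the core of the argument. First observe that $\Delta\colon W^{2,s'}_0(\Omega) \to \cA^{s'}(\Omega)$ is a bounded bijection: it is surjective by the definition of $\cA^{s'}$, and injective because any $v \in W^{2,s'}_0(\Omega)$ with $\Delta v = 0$ is harmonic with vanishing Cauchy data and therefore zero. Since $\cA^{s'}(\Omega)$ is closed in $L^{s'}(\Omega)$ (as a summand in Lemma \ref{L42}), the open mapping theorem yields a bounded inverse $\Delta^{-1}$. Given $F \in W^{-2,s}(\Omega)$, the functional $g \mapsto \inn{F, \Delta^{-1} g}$ is bounded on $\cA^{s'}(\Omega)$; Hahn--Banach extends it to $L^{s'}(\Omega)$, and the extension is represented by some $\tilde p \in L^s(\Omega)$. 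Decompose $\tilde p = p_0 + p_h$ with $p_0 \in \cA^s(\Omega)$ and $p_h \in \cB^s(\Omega)$. The crucial identity is
\[
\int_{\Omega} p_h \, \Delta v \, d\x = 0 \qquad \text{for every } v \in W^{2,s'}_0(\Omega),
\]
proved by approximating $v$ by $v_n \in C_c^\infty(\Omega)$ in the $W^{2,s'}$-norm, integrating by parts twice (no boundary terms since $v_n$ has compact support), using $\Delta p_h = 0$ on $\Omega$, and passing to the limit via $p_h \in L^s$ and $\Delta v_n \to \Delta v$ in $L^{s'}$. Consequently
\[
\inn{T_s p_0, v} = \int_{\Omega} p_0 \, \Delta v \, d\x = \int_{\Omega} \tilde p \, \Delta v \, d\x = \inn{F, v},
\]
so $T_s p_0 = F$.

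The continuity of $T_s^{-1}$ now follows from the open mapping theorem applied to the bounded bijection $T_s$ between the Banach spaces $\cA^s(\Omega)$ and $W^{-2,s}(\Omega)$. The genuinely non-routine step is the orthogonality of the harmonic component $p_h$ to the range $\Delta(W^{2,s'}_0(\Omega))$; everything else is duality and the open mapping theorem. The approximation by $C_c^\infty$ test functions handles this cleanly, provided one has in hand the correct interpretation of $W^{2,s'}_0(\Omega)$ (closure of $C_c^\infty(\Omega)$ in $W^{2,s'}(\Omega)$) that is consistent with both the definition of $\cA^{s'}$ and the decomposition in Lemma \ref{L42}.
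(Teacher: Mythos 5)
Your proof is correct, but it reaches the conclusion by a genuinely different route than the paper. The paper establishes the coercivity estimate $\norm{p_0}_{L^s(\Omega)} \lesssim \norm{T_s p_0}_{W^{-2,s}(\Omega)}$ directly: it tests $p_0$ against $q = |p_0|^{s-2}p_0 \in L^{s'}(\Omega)$, decomposes $q = \Delta v_0 + q_h$ via Lemma \ref{L42}, and uses the same orthogonality $\int_\Omega p_0\, q_h = 0$ that you prove for your harmonic component $p_h$; injectivity and closed range follow at once, and surjectivity is then delegated to the closed range theorem. You instead prove surjectivity constructively: invert $\Delta$ on $\cA^{s'}(\Omega)$, pull $F$ back to a bounded functional on $\cA^{s'}(\Omega)$, extend by Hahn--Banach, represent the extension by $\tilde p \in L^s(\Omega)$, and discard the harmonic part by the orthogonality relation. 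Your route produces the preimage explicitly and actually completes the surjectivity step that the paper leaves implicit (to apply the closed range theorem one still has to check density of the range, i.e.\ injectivity of the adjoint, which again requires the same orthogonality); moreover, tracking constants through your construction gives $\norm{p_0}_{L^s} \le \norm{P_{\cA^s}}\,\norm{\Delta^{-1}}\,\norm{F}_{W^{-2,s}}$ directly, so the final appeal to the open mapping theorem is not even needed. The paper's route is shorter and yields the quantitative lower bound for $T_s$ in one stroke. Both arguments rest on the same two inputs from Simader's theory, which you should make explicit: the decomposition of Lemma \ref{L42} is a \emph{topological} direct sum (so $\cA^p(\Omega)$ is closed and the projections are bounded, equivalently $\norm{v}_{W^{2,p}_0} \lesssim \norm{\Delta v}_{L^p}$), and the uniqueness you invoke for the injectivity of $\Delta$ on $W^{2,s'}_0(\Omega)$ ("harmonic with vanishing Cauchy data") is, for $s'$ close to $1$, itself a nontrivial statement belonging to that same theory rather than an elementary fact.
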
 

\begin{proof}
Let $ p_0 \in \cA^s(\Omega)$ and set 
\[q = |p_0|^{s-2} p_0 \in L^{s'}(\Omega).\] 
By Lemma \ref{L42} there exist unique $q_0 \in \cA^{s'}(\Omega)$ and $q_h \in \cB^{s'}(\Omega)$ such that 
\[q = q_0 + q_h.\] 
In particular, $q_0 = \Delta v_0$ for some $v_0\in W^{2,s'}_0(\Omega)$. 
Hence 
\begin{align*}
\norm{p_0}^s_{L^s(\Omega)} 
&= \int_{\Omega} p_0 q = \int_{\Omega} p_0 \Delta v_0 
\le \norm{T_s p_0}_{W^{-2,s}(\Omega)} \norm{v_0}_{W^{2,s'}_0(\Omega)} \\
&\lesssim \norm{T_s p_0}_{W^{-2,s}(\Omega)} \norm{q_0}_{L^{s'}(\Omega)}  
\lesssim \norm{T_s p_0}_{W^{-2,s}(\Omega)} \norm{p_0}^{s-1}_{L^s(\Omega)}.      
\end{align*}
This implies that 
\[\norm{p_0}_{L^s(\Omega)} \lesssim \norm{T_s p_0}_{W^{-2,s}(\Omega)}\]
and the operator $ T_s$ has closed range.  
Furthermore, Lemma \ref{L42} implies also that if $T_s p_0=0$, then $p_0 \in \cA^s(\Omega) \cap \cB^s(\Omega) = \set{0}$. 
Hence $T_s$ is injective and the result follows from the closed range theorem.

\end{proof}

\begin{rem}
\label{R42}
\begin{enumerate}
\item
Let $\bbf \in \bL^s(\Omega ; \R^{n\times n})$, $1<s<\infty$. 
Then by Lemma \ref{L43} there exists a unique $ p_0 \in \cA^s(\Omega)$ such that 
\begin{equation}
\label{E44}
\Delta p_0 = \nabla \cdot \nabla \cdot \bbf
\end{equation}  
in $\Omega$ in the sense of distributions.
Morevoer, there holds the estimate 
\begin{equation}
\label{E45}
\norm{p_0}_{L^s(\Omega)} \lesssim \norm{\bbf}_{L^s(\Omega)}.
\end{equation}
\item
Let $\bg \in \bL^s(\Omega; \R^{n} )$, $1<s< n$. 
Then by means of Sobolev's embedding theorem 
$\nabla \cdot \bg \in W^{-1,s}(\Omega) \hookrightarrow W^{-2,\, s^{\ast}}(\Omega)$ 
where $s^{\ast} = ns/(n-s)$. 
Thus, there exists a unique $p_0 \in \cA^{ s^{\ast}}(\Omega)$ such that 
\begin{equation}
\label{E46}
\Delta p_0 = \nabla \cdot \bg
\end{equation} 
in $\Omega$ in the sense of distributions. 
By the definition of $\cA^{ s^{\ast}}(\Omega)$ there exist 
$v_0 \in W^{2,s^{\ast}}_0(\Omega)$ with $ \Delta v_0 = p_0$, and there holds $\Delta ^2 v_0 = \nabla \cdot \bg $ in $\Omega$ in the sense of distributions. By means of 
elliptic regularity we find $ v_0 \in W^{3,s}(\Omega)$ together with the estimate 
\begin{equation}
\label{E47}
\norm{\nabla p_0}_{L^s(\Omega)} \lesssim \norm{v_0}_{W^{3,s}(\Omega)} 
\lesssim \norm{\bg}_{L^s(\Omega)}. 
\end{equation} 
\item
Let $p\in L^s(\Omega)$. 
In view of Lemma \ref{L42} we have $p = p_0 + p_h$ with unique 
$p_0 \in \cA^s(\Omega)$ and $p_h \in \cB^s(\Omega)$. 
Observing that $p- \inn{p}_{\Omega} = p_0 + (p_h- \inn{p_h}_{\Omega})$ 
and appealing to Remark \ref{R41} it follows that  
\begin{equation}
\label{E48}
\norm{p_h-\inn{p_h}_{\Omega}}_{L^s(\Omega)} 
\lesssim \norm{p-\inn{p}_{\Omega}}_{L^s(\Omega)}. 
\end{equation}
\item
The implied constant in \eqref{E45}, \eqref{E47} and \eqref{E48} depend only on $s$ and $\Omega$.
When $\Omega$ equals a ball, these constants depend on $s$ but not on the radius of the ball.  
\end{enumerate}
\end{rem}

\begin{lem}
[pressure inequality II]
\label{L44}
For $0 < r \le 1$ and $0< \theta \le 1/4$
\[P(2\theta r) \lesssim \theta^2 P(r) + \theta^{-2} E(r)^2 + \theta^{-2} F(r).\]
\end{lem}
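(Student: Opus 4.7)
By the parabolic scaling $\bar{\bv}(y,s) = r\,\bv(ry, r^2 s)$, $\bar{p}(y,s) = r^2 p(ry, r^2 s)$, $\bar{\bbf}(y,s) = r^3 \bbf(ry, r^2 s)$, all three quantities $P$, $E$, $F$ are invariant, so it suffices to prove the lemma for $r=1$. For a.e.\ $s \in (-1,0)$, since $p(\cdot,s) \in L^{3/2}(B(1))$, Lemma \ref{L42} yields a decomposition $p(\cdot,s) = p_0(\cdot,s) + p_h(\cdot,s)$ with $p_0 \in \cA^{3/2}(B(1))$ (in particular of zero mean) and $p_h \in \cB^{3/2}(B(1))$, so that $p_h$ is harmonic on $B(1)$. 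The divergence-free condition lets me rewrite the pressure equation as $-\Delta p = \nabla \cdot \bg$ with $\bg = (\widetilde\bv \cdot \nabla)\widetilde\bv - \bbf$ and $\widetilde\bv = \bv - \inn{\bv}_{B(1)}$; the recentering is harmless because $\inn{\bv}_{B(1)} \cdot \nabla(\cdot)$ contributes only a divergence-free term.

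Applying Remark \ref{R42}(2) with $s = 3/2$ (hence $s^\ast = 3$) produces $p_0 \in \cA^3(B(1))$ with $\norm{\nabla p_0}_{L^{3/2}(B(1))} \lesssim \norm{\bg}_{L^{3/2}(B(1))}$, and since $p_0$ has zero mean, Sobolev--Poincar\'e upgrades this to $\norm{p_0}_{L^3(B(1))} \lesssim \norm{\bg}_{L^{3/2}(B(1))}$. H\"older combined with the Sobolev--Poincar\'e embedding $\norm{\widetilde\bv}_{L^6(B(1))} \lesssim \norm{\nabla\bv}_{L^2(B(1))}$ gives $\norm{(\widetilde\bv\cdot\nabla)\widetilde\bv}_{L^{3/2}} \le \norm{\widetilde\bv}_{L^6}\norm{\nabla\bv}_{L^2} \lesssim \norm{\nabla\bv}_{L^2}^2$, while $\norm{\bbf}_{L^{3/2}(B(1))} \lesssim \norm{\bbf}_{L^2(B(1))}$ because $B(1)$ has finite measure. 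This yields the pointwise-in-time bound $\norm{p_0(\cdot,s)}_{L^3(B(1))} \lesssim \norm{\nabla\bv(\cdot,s)}_{L^2(B(1))}^2 + \norm{\bbf(\cdot,s)}_{L^2(B(1))}$.

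For the harmonic component, since $p_h - c$ is harmonic for any constant $c$, interior $C^1$-estimates applied to the harmonic function $\nabla p_h$ give, for $2\theta \le 1/2$,
\[
\norm{p_h(\cdot,s) - c'(s)}_{L^3(B(2\theta))} \lesssim \theta^2 \norm{p_h(\cdot,s) - c(s)}_{L^3(B(1))},
\]
with $c'(s) = p_h(0,s)$ and $c(s)$ arbitrary. I would then choose $c(s)$ so that $p_h - c = (p - c) - p_0$, absorb the $\norm{p_0}$ contribution into the non-harmonic estimate, and use $\norm{p - \inn{p}_{B(1)}}_{L^3} \le 2\norm{p - c_\ast}_{L^3}$ for any constant $c_\ast$ to obtain $\big(\int \norm{p_h - c}_{L^3(B(1))}\, ds\big)^2 \lesssim P(1) + \big(\int\norm{p_0}_{L^3}\,ds\big)^2$.

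Assembling by the triangle inequality, integrating over $s \in (-(2\theta)^2, 0) \subset (-1,0)$, squaring with $(a+b)^2 \lesssim a^2 + b^2$, and using $\theta^2 \le \theta^{-2}$ to absorb the $\theta^2(\int\norm{p_0})^2$ term, I arrive at
\[
P(2\theta) \lesssim \theta^2 P(1) + \theta^{-2}\Big(\int_{-1}^0 \norm{p_0}_{L^3(B(1))}\, ds\Big)^2.
\]
A final H\"older in time with exponents $4$ and $4/3$ gives $\int_{-1}^0 \norm{\bbf}_{L^2(B(1))}\,ds \le F(1)^{1/2}$, which together with $\int_{-1}^0 \norm{\nabla\bv}_{L^2(B(1))}^2\,ds = E(1)$ yields $(\int\norm{p_0}_{L^3})^2 \lesssim E(1)^2 + F(1)$, and undoing the scaling recovers the stated estimate. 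The main technical point is the $L^3$ bound on $p_0$: one must choose the correct Sobolev exponent in Remark \ref{R42}(2) and recenter $\bv$ so that Poincar\'e delivers $\norm{\nabla\bv}_{L^2}^2$ rather than a less favorable $L^6$-type quantity.
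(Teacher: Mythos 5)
Your proof is correct and follows essentially the same route as the paper: decompose $p$ (for a.e.\ time slice) into a Newtonian part satisfying $\norm{p_0}_{L^3(B)} \lesssim \norm{\nabla\bv}_{L^2(B)}^2 + \norm{\bbf}_{L^{3/2}(B)}$ and a harmonic remainder whose oscillation over $B(2\theta)$ gains the factor $\theta^2$ via interior gradient estimates and the mean value property, then integrate in time and square. The only (cosmetic) difference is that you estimate the quadratic term in single-divergence form $\nabla\cdot((\widetilde{\bv}\cdot\nabla)\widetilde{\bv})$ using \eqref{E47} with $s=3/2$ plus the embedding $W^{1,3/2}\hookrightarrow L^3$, whereas the paper uses the double-divergence form and the Calder\'on--Zygmund bound \eqref{E45} with $s=3$; both give the same pointwise-in-time estimate.
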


\begin{proof}
We may assume $r=1$ and denote $B=B(1)$ and $Q=Q(1)$.
By Lemma \ref{L42} we may decompose for a. e. $t \in I_{R}$ 
\[p = p_0 + p_h\] 
where $p_0 \in \cA^3(B)$ 
and $p_h \in \cB^3(B)$ is harmonic. 
By Remark \ref{R42} we may decompose 
\[p_0 = p_{01} + p_{02}\] 
where $ p_{01}
\in \cA^3(B)$ is the unique weak solution to 
\[\Delta p_{01} 
= - \nabla \cdot \nabla \cdot ((\bv - \inn{\bv}_B) \otimes (\bv - \inn{\bv}_B))\]
in $B$ in the sense of distributions, 
while $p_{02} \in \cA^3(B)$ is the unique weak solution to 
\[\Delta p_{02} = \nabla \cdot \bbf\]
in $B$ in the sense of distributions for a. e. $ t\in I(r) := (-r^2,0)$. 

By the aid of \eqref{E45} and \eqref{E46} along with Sobolev-Poincar\'e's inequality, we find that for a. e. $ t\in I_{R}$
\begin{align*}
\norm{p_{01}(t)}_{L^3(B)} 
&\lesssim \norm{\bv(t) - \inn{\bv}_B(t)}^2_{\bL^6(B)} 
\lesssim \norm{\nabla \bv(t)}^2_{\bL^2(B)}, \\
\norm{p_{02}(t)}_{L^3(B)} 
&\lesssim \norm{\nabla p_{02}(t)}_{\bL^{3/2}(B)}
\lesssim \norm{\bbf(t)}_{\bL^{3/2}(B)}. 
\end{align*}
Integrating in time, we get 
\begin{align}
\label{E410}
\int_I \norm{p_{01}}_{L^3(B)} ds 
&\lesssim \int_I \norm{\nabla \bv}^2_{\bL^2(B)} ds = E(1), \\
\label{E411}
\int_I \norm{p_{02}}_{L^3(B)} ds 
&\lesssim \int_I \norm{\bbf }_{\bL^{3/2}(B)} ds = F(1)^{1/2}.  
\end{align}

On the other hand, employing \eqref{E48}, 
we see that $p_h - \inn{p_h}_B \in L^1(I;L^3(B))$ and 
\[\int_I \norm{p_h - \inn{p_h}_B}_{L^3(B)} ds
\lesssim \int_I \norm{p - \inn{p_h}_B}_{L^3(B)} ds.\]
Applying the Poincar\'e-type inequality and using the mean value property of harmonic functions, we obtain that 
\begin{equation}
\label{E413}
\begin{split}
\int_{I(2\theta)} \norm{p_h - \inn{p_h}_{ B_{ \theta  R}}}_{L^3(B(2\theta))} ds 
&\lesssim \theta^2 \int_{I(1/2)} \norm{\nabla p_h}_{\bL^\infty(B(1/2))} ds \\
&\lesssim \theta^2 \int_I \norm{p - \inn{p}_B}_{L^3(B)} ds.
\end{split}
\end{equation}
Combining \eqref{E410}, \eqref{E411}, and \eqref{E413}, we get 
\begin{align*}
P(2\theta)^{1/2} 
&\lesssim \theta^{-1} \int_{I(2\theta)} \norm{p - \inn{p}_{B(\theta)}}_{L^3(B(2\theta))} ds \\
&\lesssim \theta^{-1} \int_{I(2\theta)} \norm{p_h - \inn{p_h}_{B(\theta)}}_{L^3(B(2\theta))} ds \\
&\quad + \theta^{-1} \int_I \norm{p_{01}}_{L^3(B)} ds
+ \theta^{-1} \int_I \norm{p_{02}}_{L^3(B)} ds\\
&\lesssim \theta \int_I \norm{p - \inn{p}_B}_{L^3(B)} ds
+ \theta^{-1} E(1) + \theta^{-1} F(1)^{1/2} 
\end{align*}
and the result follows.

\end{proof}

\begin{rem}
The implied constants of the estimates in this section are all absolute.
\end{rem}

\section{Interpolation inequalities}
\label{S5}

In this section we give a few interpolation inequalities.
We shall use one more scaled functional,
\[\widetilde{C}(r) = r^{-2} \iint_{Q(r)} |\bv-\inn{\bv}_{B(r)}|^3 d\y ds.\]

\begin{lem}
\label{L51}
For $0 < r \le 1$ and $0 < \theta \le 1$
\[C(\theta r) \lesssim \theta C(r) + \theta^{-2} \widetilde{C}(r)\]
and 
\begin{equation}
\label{E51} 
C(\theta r) \lesssim \theta^3 A(r)^{3/2} + \theta^{-2} \widetilde{C}(r).
\end{equation}
\end{lem}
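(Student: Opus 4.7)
The plan is to split the velocity about its spatial mean over the larger ball $B(r)$ (which depends only on time), writing
\[|\bv|^3 \lesssim |\bv - \inn{\bv}_{B(r)}|^3 + |\inn{\bv}_{B(r)}|^3,\]
and to estimate the two resulting contributions to $C(\theta r)$ separately. Both inequalities will share the same fluctuation term and differ only in how the constant-in-space mean piece is controlled.

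For the fluctuation piece I would simply enlarge the spatial integration domain from $B(\theta r)$ to $B(r)$ and collect the powers of $\theta$ coming from the $(\theta r)^{-2}$ prefactor, obtaining $\theta^{-2} \widetilde C(r)$ directly from the definition of $\widetilde C$. It is essential here that the subtracted constant be $\inn{\bv}_{B(r)}$ rather than $\inn{\bv}_{B(\theta r)}$, so that the remainder plugs into $\widetilde C(r)$ rather than $\widetilde C(\theta r)$; this is the one design choice in the decomposition.

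For the mean piece, since $\inn{\bv}_{B(r)}(s)$ is constant in space, the spatial integral over $B(\theta r)$ simply produces a factor $|B(\theta r)| \sim (\theta r)^3$, leaving a one-variable time integral of $|\inn{\bv}_{B(r)}(s)|^3$. To get the first inequality I would apply Jensen's inequality as $|\inn{\bv}_{B(r)}|^3 \le \fint_{B(r)} |\bv|^3 \, d\y$ and then enlarge the time interval from $(-\theta^2 r^2,0)$ to $(-r^2,0)$; a direct count of scaling factors collapses $(\theta r)^{-2}\cdot(\theta r)^3\cdot|B(r)|^{-1}\cdot r^2$ to $\theta$, producing $\theta\, C(r)$. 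To get \eqref{E51} I would instead apply Jensen as $|\inn{\bv}_{B(r)}|^3 \le (\fint_{B(r)} |\bv|^2 \, d\y)^{3/2}$, take the supremum in $s$ so as to introduce $A(r)$ through the identity $\sup_s \int_{B(r)}|\bv|^2 = r\, A(r)$, and multiply by the length $\theta^2 r^2$ of the time interval; the scaling then collapses to $\theta^3 A(r)^{3/2}$.

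Combining either mean-piece estimate with the fluctuation estimate yields the two stated bounds. I do not foresee any real obstacle: the argument uses only the triangle and Jensen inequalities together with careful bookkeeping of the powers of $\theta$ and $r$, and the only subtle point is the choice to center the decomposition at $\inn{\bv}_{B(r)}$ so that the remainder lines up with $\widetilde C(r)$ as required for use in a later iteration against Lemma \ref{L41}.
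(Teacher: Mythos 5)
Your proposal is correct and follows essentially the same route as the paper: decompose $|\bv|^3$ about the spatial mean $\inn{\bv}_{B(r)}$, absorb the fluctuation into $\theta^{-2}\widetilde C(r)$ by enlarging the domain, and control the mean piece by Jensen's inequality in the two ways that yield $\theta C(r)$ and $\theta^3 A(r)^{3/2}$ respectively. The scaling bookkeeping you describe checks out.
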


\begin{proof}
We may assume $r=1$ and denote $B=B(1)$ and $\inn{\bv}_B = \fint_B \bv d\y$.
By subtracting the average $\inn{\bv}_B$ we have 
\[
\int_{B(\theta)} |\bv|^3 d\y \lesssim \theta^3 |\inn{\bv}_B|^3 + \int_{B(\theta)} |\bv-\inn{\bv}_B|^3 d\y.
\]
Integrating in time and using Jensen's inequality we get the result.

\end{proof}

\begin{lem} 
[interpolation inequality I]
\label{L52}
Let 
\begin{equation}
\label{E52}
\frac{9}{5} \le q \le 2, \quad \frac{3-q}{5q-6} \le k \le \frac{3-q}{3}.
\end{equation}
Then for $0<r\le1$ 
\begin{equation}
\label{E53}
\widetilde{C}(r) \lesssim A(r)^{(9-3q-3qk)/(6-2q)} E_q(r)^{3k/(3-q)}.
\end{equation}
\end{lem}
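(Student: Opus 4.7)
The plan is to interpolate $\|\bv-\inn{\bv}_{B(r)}\|_{L^3(B(r))}^3$ in space between the $L^2$ norm of $\bv$ (which after $\sup_t$ is controlled by $A(r)$) and the $L^q$ norm of $\nabla \bv$ (whose $q$-th power after $\int dt$ yields $E_q(r)$), and then to carry out a H\"older step in $t$. Because all three functionals $\widetilde{C}$, $A$, $E_q$ are invariant under the parabolic scaling $\bv(y,s)\mapsto \lambda \bv(\lambda y,\lambda^2 s)$, the powers of $r$ in the final estimate must cancel automatically; the one-parameter family indexed by $k$ will come from a single free parameter $\gamma$ in a Poincar\'e step.

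The two tools on the ball $B(r)$ are the Gagliardo--Nirenberg inequality for mean-zero functions,
\[
\|\bv-\inn{\bv}_{B(r)}\|_{L^3(B(r))}\lesssim \|\bv-\inn{\bv}_{B(r)}\|_{L^2(B(r))}^{1-\theta}\|\nabla \bv\|_{L^q(B(r))}^{\theta},\qquad \theta=\frac{q}{5q-6},
\]
and Sobolev--Poincar\'e, $\|\bv-\inn{\bv}_{B(r)}\|_{L^{3q/(3-q)}(B(r))}\lesssim \|\nabla \bv\|_{L^q(B(r))}$ (constant independent of $r$ by scaling), from which H\"older in space gives $\|\bv-\inn{\bv}_{B(r)}\|_{L^2(B(r))}\lesssim r^{(5q-6)/(2q)}\|\nabla \bv\|_{L^q(B(r))}$. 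Applying the latter bound to a $\gamma$-fraction of the $L^2$ factor produces, for any $\gamma\in[0,3(1-\theta)]$,
\[
\|\bv-\inn{\bv}_{B(r)}\|_{L^3(B(r))}^3 \lesssim r^{\gamma(5q-6)/(2q)}\|\bv-\inn{\bv}_{B(r)}\|_{L^2(B(r))}^{3(1-\theta)-\gamma}\|\nabla \bv\|_{L^q(B(r))}^{3\theta+\gamma}.
\]

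Next I would integrate in $t\in(-r^2,0)$, pull the $L^2$ factor outside as $\sup_t\|\bv\|_{L^2(B(r))}^{3(1-\theta)-\gamma}\lesssim (rA(r))^{(3(1-\theta)-\gamma)/2}$, and apply H\"older in time to $\int \|\nabla \bv\|_{L^q}^{3\theta+\gamma}ds$ with the pair $(q/(3\theta+\gamma),\, q/(q-3\theta-\gamma))$. This H\"older step is admissible only if $3\theta+\gamma\le q$. Setting $\eta=3\theta+\gamma$ and $k=(3-q)\eta/(3q)$, the admissible range $\gamma\in[0,\, q-3\theta]$ translates exactly into $k\in[(3-q)/(5q-6),\,(3-q)/3]$, and a short algebraic check identifies the exponents of $A(r)$ and $E_q(r)$ with the claimed $(9-3q-3qk)/(6-2q)$ and $3k/(3-q)$; the accumulated power of $r$ collapses to $r^{2}$, which is absorbed by the $r^{-2}$ in the definition of $\widetilde{C}$. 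The key point, and the only real obstacle, is the admissibility condition $3\theta\le q$: it forces $q\ge 9/5$, which is exactly the hypothesis, and at the endpoint $q=9/5$ the range of $k$ collapses to a single value so $\gamma$ must vanish and the estimate reduces to pure Gagliardo--Nirenberg with no Poincar\'e correction.
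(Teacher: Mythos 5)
Your proof is correct and takes essentially the same route as the paper's: a spatial interpolation of $\|\bv-\inn{\bv}_{B(r)}\|_{L^3}$ between the kinetic energy $\|\bv\|_{L^2}$ and $\|\nabla\bv\|_{L^q}$ with one free parameter, followed by a H\"older/Jensen step in time whose admissibility condition (your $3\theta+\gamma\le q$, the paper's $kq^*/q\le 1$ with $q^*=3q/(3-q)$) yields exactly the stated range of $k$ and the endpoint $q=9/5$. The only difference is bookkeeping: the paper performs a single H\"older interpolation $L^3$ between $L^2$ and $L^{q^*}$ and then applies Sobolev--Poincar\'e to the $L^{q^*}$ factor, whereas you start from Gagliardo--Nirenberg at one endpoint and add a Poincar\'e correction of size $\gamma$; the two parametrizations are equivalent.
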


\begin{proof}
By scaling we may assume $r=1$ and denote $B=B(1)$.
By the Sobolev-Poincar\'e inequality 
\begin{align*}
\int_B |\bv-\inn{\bv}_B|^3 d\y
&\lesssim \Big(\int_B |\bv|^2 d\y\Big)^{(3-kq^*)/2} \Big(\int_B |\bv-\inn{\bv}_B|^{q^*} d\y\Big)^k \\
&\lesssim A(1)^{(3-kq^*)/2} \Big(\int_B |\nabla \bv|^q d\y\Big)^{kq^*/q}
\end{align*}
where $q^*=3q/(3-q)$.
Note that from \eqref{E52} we have $0<(3-kq^*)/2<1$, $0<k<1$, and 
\[
0 < (3-kq^*)/2 + k \le 1, \quad 0 < kq^*/q \le 1.
\]
By the Jensen inequality 
\begin{align*}
&\int_{-1}^0 \int_{B} |\bv-(\bv)_B|^3 d\y ds \\
&\lesssim A(1)^{(3-kq^*)/2} \int_{-1}^0 \Big(\int_B |\nabla \bv|^q d\y\Big)^{kq^*/q} ds \\
&\lesssim A(1)^{(3-kq^*)/2} E_q(1)^{kq^*/q}.
\end{align*}
A calculation shows 
\[(3-kq^*)/2=(9-3q-3qk)/(6-2q)\]
and 
\[kq^*/q=3k/(3-q).\]

\end{proof}

\begin{rem}
\label{R51}
If we choose $q=2$ and $k=1/4$, then the estimate \eqref{E53} becomes the well-known estimate 
\[
\widetilde{C}(r) \lesssim A(r)^{3/4} E(r)^{3/4}.
\]
If we choose $k=(3-q)/3$, then the estimate \eqref{E53} becomes
\begin{equation}
\label{E54}
\widetilde{C}(r) \lesssim A(r)^{(3-q)/2} E_q(r).
\end{equation}
\end{rem}

\begin{lem}
\label{L53}
Let
\[X(r) := C(r) + D(r).\]
If $\frac{9}{5} \le q \le 2$ and $\frac{3-q}{5q-6} \le k \le \frac{3-q}{3}$, then for $0 < r \le 1$ and $0 < \theta < \frac{1}{4}$
\[X(\theta r) \lesssim \theta X(r) + \theta^{-2} A(r)^{(9-3q-3qk)/(6-2q)} E_q(r)^{3k/(3-q)}.\]
\end{lem}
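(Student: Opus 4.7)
The plan is to derive this estimate as a direct consequence of the three preceding lemmas by adding the pressure and velocity cubic estimates and then bounding the common remainder term $\widetilde{C}(r)$ via the interpolation inequality.

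First I would apply Lemma \ref{L51} (the first inequality) to obtain
\[
C(\theta r) \lesssim \theta C(r) + \theta^{-2} \widetilde{C}(r),
\]
and Lemma \ref{L41} to obtain
\[
D(\theta r) \lesssim \theta D(r) + \theta^{-2} \widetilde{C}(r).
\]
Adding these two bounds gives
\[
X(\theta r) = C(\theta r) + D(\theta r) \lesssim \theta X(r) + \theta^{-2} \widetilde{C}(r),
\]
which reduces the problem to controlling $\widetilde{C}(r)$ by the mixed $A$–$E_q$ quantity on the right-hand side of the claim.

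Next I would invoke Lemma \ref{L52}, whose hypotheses are exactly the admissibility conditions $9/5 \le q \le 2$ and $(3-q)/(5q-6) \le k \le (3-q)/3$ in the statement; this yields
\[
\widetilde{C}(r) \lesssim A(r)^{(9-3q-3qk)/(6-2q)} E_q(r)^{3k/(3-q)}.
\]
Substituting into the previous display immediately gives the desired inequality
\[
X(\theta r) \lesssim \theta X(r) + \theta^{-2} A(r)^{(9-3q-3qk)/(6-2q)} E_q(r)^{3k/(3-q)}.
\]

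There is no real obstacle here beyond bookkeeping: the linear-in-$\theta$ decay of $C$ and $D$ on dyadic scales comes from harmonic regularity (for the pressure) and from subtracting the mean (for the velocity), while the interpolation lemma is precisely designed to absorb the leftover mean-oscillation term $\widetilde{C}(r)$. The only point worth checking is that the admissible range of $k$ assumed here matches the range in which Lemma \ref{L52} is valid, which it does by direct comparison with \eqref{E52}.
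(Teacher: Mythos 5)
Your proof is correct and is exactly the paper's argument: the paper's proof of this lemma consists of the single line ``It follows from combining Lemma \ref{L51}, \ref{L41}, and \ref{L52},'' which is precisely the combination you carry out (add the $C$ and $D$ decay estimates, then bound the common remainder $\widetilde{C}(r)$ by the interpolation inequality). Nothing further is needed.
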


\begin{proof}
It follows from combining Lemma \ref{L51}, \ref{L41}, and \ref{L52}.

\end{proof}

\begin{lem}
[interpolation inequality II]
\label{L54}
For $0 < r \le 1$ and $0 < \theta \le 1$

\[
G(\theta r) \lesssim  \theta^{-1} E(r) + \theta^2 A(r). 
\]
\end{lem}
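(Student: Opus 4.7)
\medskip

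\noindent\textbf{Proof proposal for Lemma \ref{L54}.}

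By the standard parabolic scaling argument we may assume $r=1$, so the goal becomes
\[
G(\theta) \lesssim \theta^{-1} E(1) + \theta^2 A(1).
\]
The plan is to decompose $\bv$ into its spatial mean over the \emph{large} ball $B(1)$ and the corresponding fluctuation, then treat each piece with the natural inequality: Sobolev--Poincar\'e for the fluctuation, and a simple Jensen bound for the mean.

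More precisely, set $c(s) = \inn{\bv(\cdot,s)}_{B(1)}$ and $\widetilde{\bv}(\cdot,s) = \bv(\cdot,s) - c(s)$. For each time slice $s \in (-\theta^2, 0)$, the triangle inequality in $L^6(B(\theta))$ gives
\[
\Big(\int_{B(\theta)} |\bv|^6 d\y\Big)^{1/3}
\lesssim \Big(\int_{B(1)} |\widetilde{\bv}|^6 d\y\Big)^{1/3} + \theta\, |c(s)|^2,
\]
where I have enlarged the domain of integration to $B(1)$ in the fluctuation term and used $|B(\theta)|^{1/3} \lesssim \theta$ on the constant term. The Sobolev--Poincar\'e inequality on $B(1)$ (note $\widetilde{\bv}$ has zero mean there) bounds the first summand by $\int_{B(1)} |\nabla \bv|^2 d\y$, while Jensen's inequality gives $|c(s)|^2 \lesssim \int_{B(1)} |\bv|^2 d\y \le A(1)$.

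Integrating in $s$ over $(-\theta^2, 0)$ and multiplying by $\theta^{-1}$ then yields
\[
G(\theta) \lesssim \theta^{-1} \int_{-\theta^2}^0 \int_{B(1)} |\nabla \bv|^2 d\y ds + \theta^{-1}\cdot \theta^3 A(1)
\le \theta^{-1} E(1) + \theta^2 A(1),
\]
which is the desired estimate.

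The only conceptually delicate point is the choice of which ball's average to subtract. If instead one subtracted $\inn{\bv}_{B(\theta)}$, Sobolev--Poincar\'e would keep the integration domain as $B(\theta)$, but the average itself would be bounded only by $\theta^{-3/2}\|\bv\|_{L^2(B(\theta))}$, producing a negative power of $\theta$ that cannot be reconciled with the favorable $\theta^2$ factor we want in front of $A(1)$. Subtracting over $B(1)$ sacrifices a little sharpness on the gradient term (yielding $E(1)$ rather than something strictly smaller) but lets $|B(\theta)| \lesssim \theta^3$ provide the smallness in the constant piece; this is the entire mechanism of the lemma.
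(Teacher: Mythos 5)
Your proof is correct and follows essentially the same route as the paper: subtract the spatial mean over the large ball $B(r)$, apply Sobolev--Poincar\'e on $B(r)$ to bound the fluctuation's $L^6$ norm by $\|\nabla\bv\|_{L^2(B(r))}^2$, and use Jensen together with $|B(\theta r)|\lesssim(\theta r)^3$ to extract the $\theta^2 A(r)$ factor from the mean. The only cosmetic difference is that you take the $1/3$ power before splitting, whereas the paper splits $\int_{B(\theta)}|\bv|^6$ first; your closing remark about why one must subtract the average over the large ball rather than over $B(\theta)$ is also accurate.
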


\begin{proof}
We may assume $r=1$ and denote $B=B(1)$ and $\inn{\bv}_B = \fint_B \bv d\y$.
By the Sobolev-Poincar\'e inequality 
\begin{align*}
\int_{B(\theta)} |\bv|^6 d\y
&\lesssim \int_{B(\theta)} |\bv-\inn{\bv}_B|^6 d\y + \int_{B(\theta)} |\inn{\bv}_B|^6 d\y \\
&\lesssim \Big(\int_B |\nabla\bv|^2 d\y\Big)^3 + (\theta r)^3 |\inn{\bv}_B|^6.
\end{align*}
Thus, we have
\begin{align*}
G(\theta)
&= \theta^{-1} \int_{-\theta^2}^0 \Big(\int_{B(\theta)} |\bv|^6 d\y\Big)^{1/3} ds \\
&\lesssim \theta^{-1} E(r) + \int_{-\theta^2}^0 |\inn{\bv}_B|^2 ds,
\end{align*}
and the result follows.

\end{proof}

\begin{lem}
\label{L55}
\[\Big(r^{-2} \iint_{Q(r)} |\bv|^3 d\y ds\Big)^{2/3} \lesssim A(r) + E(r).\]
\end{lem}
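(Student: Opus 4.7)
The aim is to show $C(r)^{2/3} \lesssim A(r)+E(r)$, equivalently $C(r) \lesssim (A(r)+E(r))^{3/2}$. The natural route is a Gagliardo--Nirenberg interpolation inside the spatial slice, followed by a Hölder estimate in time. The key pointwise-in-time inequality is the $L^3$-interpolation
\[
\|\bv(s)\|_{L^3(B(r))} \le \|\bv(s)\|_{L^2(B(r))}^{1/2}\|\bv(s)\|_{L^6(B(r))}^{1/2},
\]
since $1/3 = (1/2)(1/2)+(1/2)(1/6)$. Raised to the third power this gives $\|\bv\|_3^3 \le \|\bv\|_2^{3/2}\|\bv\|_6^{3/2}$, which is the workhorse.

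Next, I would control the $L^6$-norm by $\|\bv\|_{L^6(B(r))} \lesssim \|\nabla \bv\|_{L^2(B(r))} + r^{-1}\|\bv\|_{L^2(B(r))}$ (the Sobolev embedding $W^{1,2}\hookrightarrow L^6$ on the ball with the correct scaling, which does not require subtracting the mean). The $L^2$-factors contribute powers of $A(r)$: from the definition, $\|\bv(s)\|_{L^2(B(r))}^2 \le r A(r)$ for a.e. $s$, so these factors pull out of the time integral as $r^{3/4}A(r)^{3/4}$ each.

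Integrating in time on $(-r^2,0)$ then yields
\[
\iint_{Q(r)}|\bv|^3\,d\y ds \lesssim r^{3/4}A(r)^{3/4}\int_{-r^2}^0 \Bigl(\|\nabla \bv\|_{L^2(B(r))}^{3/2}+r^{-3/2}\|\bv\|_{L^2(B(r))}^{3/2}\Bigr)ds.
\]
For the gradient term I would apply Hölder in time with exponents $4/3$ and $4$ to convert $\int\|\nabla\bv\|_2^{3/2}\,ds$ into $r^{1/2}\bigl(\int\|\nabla\bv\|_2^2\,ds\bigr)^{3/4} = r^{1/2}(rE(r))^{3/4} = r^{5/4}E(r)^{3/4}$; the $L^2$-term contributes $r^{-3/2}\cdot r^2\cdot (rA(r))^{3/4}=r^{5/4}A(r)^{3/4}$. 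Together these give
\[
\iint_{Q(r)}|\bv|^3\,d\y ds \lesssim r^{2}\,A(r)^{3/4}\bigl(E(r)^{3/4}+A(r)^{3/4}\bigr),
\]
so $C(r)\lesssim A(r)^{3/4}(E(r)^{3/4}+A(r)^{3/4})\lesssim (A(r)+E(r))^{3/2}$, and taking the $2/3$-power yields the claim.

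There is no real obstacle here beyond the bookkeeping of scaling exponents; the only place where one could go wrong is invoking Sobolev's inequality in a form that loses the correct $r$-scaling, so I would be careful to use the $W^{1,2}\hookrightarrow L^6$ embedding with its natural scaling on $B(r)$ rather than first subtracting the spatial mean (which would require an extra Poincaré step and only produce a slightly different, but equivalent, bound).
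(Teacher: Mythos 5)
Your argument is correct and is essentially the paper's: both interpolate $L^3$ between $L^2$ and $L^6$ in space, control the $L^6$ norm by the (scaled) Sobolev embedding, and then integrate in time. The only cosmetic difference is that the paper chooses the spatial H\"older exponents so that $\|\bv\|_{L^6}^2$ appears, making the time integral exactly the functional $G(r)$ (handled by Lemma \ref{L54}), whereas your symmetric split $\|\bv\|_3^3\le\|\bv\|_2^{3/2}\|\bv\|_6^{3/2}$ requires one extra H\"older step in time; both routes land on the same bound.
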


\begin{proof}
By scaling we may assume $r=1$ and denote $B=B(1)$ and $Q=Q(1)$.
By the H\"older inequality
\[
\iint_Q |\bv|^3 d\y ds 
\le \int_{-1}^0 \Big(\int_B |\bv|^2 d\y\Big)^{1/2} 
\Big(\int_B |\bv|^6 d\y\Big)^{1/3} ds.
\]
By the Young inequality 
\[\Big(\iint_Q |\bv|^3 d\y ds\Big)^{2/3} \lesssim A(1)^{1/3} G(1)^{2/3} \le A(1) + G(1).\]
By Lemma \ref{L54} with $\theta=1$ we get the result.

\end{proof}

\begin{rem}
The implied constants of the estimates in this section are all absolute.
\end{rem}

\section{Control of local kinetic energy and pressure}
\label{S6}

The aim of this section is to prove that the scaled quantities of local kinetic energy and pressure are controlled by the velocity gradient.

\begin{lem}
\label{L61}
Let $9/5 \le q \le 2$.
There exists an absolute positive constant $\gamma$ such that if $1<\overline{E}_q<\infty$, then 
\begin{equation}
\label{E61}
\limsup_{r\to0} [A(r)+D(r)] \le \gamma \overline{E}_q^{2/(q-1)}.
\end{equation}
\end{lem}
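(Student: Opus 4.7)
The plan is to combine the Caccioppoli-type iteration of Lemma~\ref{L53} with the local energy inequality of Lemma~\ref{L31} to obtain a two-scale recursion for $X := C + D$, and then take $\limsup$ to close an absorption argument. The crucial choice is $k = (3-q)/3$ in Lemma~\ref{L52}, the upper endpoint of the allowed range: for this value the $E_q$-exponent in Lemma~\ref{L53} collapses to $1$ and the $A$-exponent becomes $(3-q)/2$, which is strictly less than $1$ for $q > 1$. This sub-linearity is precisely what drives the absorption and yields the final exponent, since $1/(1-(3-q)/2) = 2/(q-1)$.

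With $k = (3-q)/3$, Lemma~\ref{L53} reads
\[
X(\theta r) \le C_0 \theta X(r) + C_0 \theta^{-2} A(r)^{(3-q)/2} E_q(r).
\]
From Lemma~\ref{L31} combined with Young's inequality applied to the $C(2r)^{2/3}$ and $C(2r)^{1/3} D(2r)^{2/3}$ terms I have $A(r) \le C_1(1+X(2r))$. Substituting, using the subadditivity $(a+b)^{(3-q)/2} \le a^{(3-q)/2} + b^{(3-q)/2}$, and applying Young's inequality with the dual pair $(2/(3-q),\, 2/(q-1))$ to the resulting $E_q(r)\, X(2r)^{(3-q)/2}$ term (together with $E_q(r) \le 2\overline{E}_q$ for small $r$ and $\overline{E}_q > 1$ to merge all lower-order forcings into $\overline{E}_q^{2/(q-1)}$), I obtain, after fixing $\theta$ and the Young parameter small in advance,
\[
X(\theta r) \le \tfrac{1}{8} X(r) + \tfrac{1}{8} X(2r) + \gamma_0 \overline{E}_q^{2/(q-1)}
\]
for all sufficiently small $r$. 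Writing $\overline{X} := \limsup_{r\to 0} X(r)$ and $\overline{A} := \limsup_{r\to 0} A(r)$, and using $\limsup(fg)\le \limsup f\cdot \limsup g$ for nonnegative quantities, this collapses to $\overline{X} \le \tfrac{1}{4}\overline{X} + \gamma_0 \overline{E}_q^{2/(q-1)}$, hence $\overline{X} \le \tfrac{4}{3}\gamma_0 \overline{E}_q^{2/(q-1)}$. Then $\overline{A} \le C_1(1+\overline{X})$ and $\overline{E}_q > 1$ give $\limsup(A+D) \le \overline{A} + \overline{X} \le \gamma\, \overline{E}_q^{2/(q-1)}$.

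The main obstacle I expect is justifying $\overline{X} < \infty$ \emph{a priori} before taking $\limsup$: the hypothesis $\overline{E}_q < \infty$ does not deliver this directly (H\"older gives only $\overline{E}_q \le \overline{E}^{q/2}$, not the reverse), and the two-scale form of the recursion rules out a one-scale Moser iteration. My fix is a dyadic shell induction: choosing $\theta = 2^{-a}$ for an integer $a$, the recursion translates into $U_k \le \tfrac{1}{4}\max(U_{k-a},\,U_{k-a-1}) + \gamma_0 \overline{E}_q^{2/(q-1)}$ for the shell-sups $U_k := \sup_{r \in [2^{-k-1} r_1,\, 2^{-k} r_1]} X(r)$, which are finite for each $k$ by continuity of $X$ and local $L^3 + L^{3/2}$ integrability of $\bv$ and $p$. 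A monotone-maximum argument starting from the finite initial block $U_0, \ldots, U_a$ then yields $\sup_k U_k < \infty$, hence $\overline{X} < \infty$, after which the $\limsup$ absorption above runs.
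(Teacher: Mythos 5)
Your proof is correct and uses exactly the paper's ingredients: Lemma \ref{L53} at the endpoint $k=(3-q)/3$ (so the $E_q$-exponent is $1$ and the $A$-exponent is $(3-q)/2<1$), Lemma \ref{L31} plus Young's inequality to bound $A$ by $1+X$ at the doubled scale, and Young's inequality with the conjugate pair $2/(3-q)$, $2/(q-1)$ to produce the exponent $2/(q-1)$. The only structural difference is the choice of iterated quantity: the paper iterates $Y(r):=A(r)+X(r)$, so that the absorption $\theta^{-2}A(r)^{(3-q)/2}E_q(r)\le \theta A(r)+C\theta^{-(7-q)/(q-1)}\overline{E}_q^{2/(q-1)}\le \theta Y(r)+\cdots$ lands on $Y$ at the \emph{same} scale $r$; combined with $Y(\theta r)\lesssim 1+X(2\theta r)$ this yields the one-scale recursion $Y(\theta r)\le \tfrac12 Y(r)+C\overline{E}_q^{2/(q-1)}$, for which finiteness and the ``standard iteration'' are immediate (one only needs $\sup_{\theta R\le s\le R}Y(s)<\infty$, which holds since $\bv\in L^\infty L^2$, $\bv\in L^3_{\rm loc}$ and $p\in L^{3/2}$). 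By substituting $A(r)\lesssim 1+X(2r)$ back into the absorption term you instead obtain a two-scale recursion in $X$ alone, which is why you then need the dyadic-shell induction to secure $\limsup_{r\to0}X(r)<\infty$ before passing to the limit; that argument is sound, but it is extra work that the paper's bookkeeping with $Y=A+X$ avoids.
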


\begin{rem}
We assume $\overline{E}_q>1$ for convenience.
Indeed, we may consider the case that $\overline{E}_q$ has a positive lower bound because of the criterion \eqref{E13}.
\end{rem}

\begin{proof}
Fix $q$ and denote $M=\overline{E}_q$.
There is $R<1$ such that for all $0 < r < R$
\[
E_q(r) \le 2M.
\]
From the local energy inequality I in Section \ref{S3}, we have for $0 < r < R$ and $0 < \theta \le 1$
\[A(\theta r) \lesssim 1 + X(2\theta r)\]
where $X(r) = C(r) + D(r)$.
If we set
\[Y(r) := A(r)+X(r),\]
then, by using the trivial estimate $X(\theta r) \le 4 X(2\theta r)$, we get 
\begin{equation}
\label{E62}
Y(\theta r) \lesssim 1 + X(2\theta r).
\end{equation}
Using Lemma \ref{L53} with $k = (3-q)/3$ and then applying Young's inequality, we obtain that for $0 < r < R$ and $0 < \theta < 1/4$
\begin{equation}
\label{E63}
\begin{split}
X(2\theta r) 
&\lesssim \theta X(r) + \theta^{-2} A(r)^{(3-q)/2} M \\
&\lesssim \theta Y(r) + \theta^{-(7-q)/(q-1)} M^{2/(q-1)}.
\end{split}
\end{equation}
Thus, combining \eqref{E62} and \eqref{E63} yields that for some positive constant $\beta\ge2$ 
\begin{align*}
Y(\theta r) 
&\le \beta \theta Y(r) + \beta \theta^{-(7-q)/(q-1)} M^{2/(q-1)} + \beta \\
&\le \beta \theta Y(r) + 2\beta \theta^{-(7-q)/(q-1)} M^{2/(q-1)}.
\end{align*}
If we fix $\theta = (2\beta)^{-1}$, then the last inequality becomes 
\[Y(\theta r) \le \frac{1}{2} Y(r) + (2\beta)^{6/(q-1)} M^{2/(q-1)}.\]
By the standard iteration argument we get 
\[
\limsup_{r\to0} Y(r) \le \gamma M^{2/(q-1)}
\]
where $\gamma = 2(2\beta)^{6/(q-1)}$.
This completes the proof.

\end{proof}

\begin{lem}
\label{L62}
There exists an absolute positive constant $\gamma$ such that if $\overline{E}<\infty$, then 
\[\limsup_{r\to0} P(r) \le \gamma \overline{E}^2.\]
\end{lem}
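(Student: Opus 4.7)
The plan is to iterate Lemma \ref{L44}, in direct analogy with the proof of Lemma \ref{L61}. Set $M = \overline{E}$. By definition of $\overline{E}$, I would first fix $R_0 \in (0,1]$ such that $E(\rho)^{2} \le 2M^{2}$ for every $\rho \le R_0$; moreover $F(\rho) \to 0$ as $\rho \to 0$ (trivially if $\bbf = 0$, and by the integrability of $\bbf$ as recorded in Remark \ref{R31} otherwise).

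Inserting these bounds into Lemma \ref{L44}, one obtains for $r \le R_0$ and $\theta \in (0, 1/4]$
\[
P(2\theta r) \le C\theta^{2}\, P(r) + 2C\theta^{-2} M^{2} + C\theta^{-2} F(r),
\]
where $C$ denotes the absolute constant from Lemma \ref{L44}. Next I would fix $\theta \in (0, 1/4]$ small enough (depending only on $C$) so that $C\theta^{2} \le 1/2$, and set $\lambda = 2\theta$ and $C_1 = 2C\theta^{-2}$, both absolute. This produces the contraction
\[
P(\lambda r) \le \tfrac{1}{2} P(r) + C_1 M^{2} + C_1 F(r) \qquad (r \le R_0).
\]

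I would then iterate along the geometric sequence $r_k = \lambda^{k} R_0 \to 0$. The standard telescoping estimate yields
\[
P(r_k) \le 2^{-k} P(R_0) + 2 C_1 M^{2} + C_1 \sum_{j=0}^{k-1} 2^{\,j-k+1}\, F(r_j).
\]
The first term vanishes as $k \to \infty$, and the discrete convolution in the last term also vanishes because $F(r_j) \to 0$. Exactly as in the closing step of Lemma \ref{L61}, this passage to the limit promotes the bound along the discrete scales $\{r_k\}$ to the full continuous limsup, giving $\limsup_{r \to 0} P(r) \le 2 C_1 M^{2} =: \gamma M^{2}$.

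The main technical point is securing the contraction factor $1/2$ via a universal choice of $\theta$, which is why one must carry along the harmless loss $E(\rho)^{2} \le 2M^{2}$ in place of $M^{2}$; everything else follows the iteration template already displayed in Lemma \ref{L61}.
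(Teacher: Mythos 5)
Your proposal is correct and follows essentially the same route as the paper, which simply invokes Lemma \ref{L44}, starts from a small radius $R$ where $E(\rho)^2\lesssim M^2$ and $F(\rho)$ is small, and performs the standard iteration; you have merely written out the contraction and telescoping explicitly. The only point worth a remark is that passing from the discrete scales $r_k=\lambda^k R_0$ to the continuous $\limsup$ costs an extra absolute factor (e.g.\ via $P(r)\le\lambda^{-2}P(r_k)$ for $r_{k+1}\le r\le r_k$), which only changes $\gamma$ by an absolute constant.
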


\begin{proof}
From Lemma \ref{L44} we have for all $r<1$ and $0< \theta \le 1/4$
\[P(2\theta r) \lesssim \theta^2 P(r) 
+ \theta^{-2} E(r)^2 + \theta^{-2} F(r).\]
Since $\lim_{r\to0} F(r)=0$, we initially start from a small number $r=R$ and then perform a standard iteration argument to get the result.

\end{proof}

\section{Proof of Theorem \ref{T1}}
\label{S7}

Fix $q$ and denote 
\[M = \overline{E}_q \quad \text{ and } \quad m = \underline{E}_q.\]
Suppose $1 < M < \infty$ for convenience.
Lemma \ref{L61} implies that there is a positive number $R$ such that for all $0<r\le R$
\begin{equation}
\label{E71}
A(r) \lesssim M^{2/(q-1)} \quad \text{ and } \quad D(r) \lesssim M^{2/(q-1)}.\end{equation}
On the other hand, there exists a sequence of positive numbers $r_n$ such that $r_n < R$ and  
\[
\lim_{n\to\infty} r_n = 0 \quad \text{ and } \quad \lim_{n\to\infty} E_q(r_n) = m. 
\]
Combining \eqref{E51} and \eqref{E54}, we have for all $n$ and $0 < \theta \le 1$
\[
C(\theta r_n) \lesssim \theta^3 A(r_n)^{3/2} + \theta^{-2} A(r_n)^{(3-q)/2} E_q(r_n).
\]
Hence from \eqref{E71} we obtain that for some $\beta>0$ 
\[
C(\theta r_n) \le \beta \theta^3 M^{3/(q-1)} + \beta \theta^{-2} M^{(3-q)/(q-1)} E_q(r_n).
\]

If $0<m$, then we take $\theta=[M^{-q/(q-1)} m]^{1/5}$ so that 
\begin{align*}
C(\theta r_n) 
&\le \beta \theta^3 M^{3/(q-1)} + \beta \theta^{-2} M^{(3-q)/(q-1)} E_q(r_n) \\
&\le \beta \Big(M^{(5-q)/(q-1)} m\Big)^{3/5} \Big(1 + m^{-1} E_q(r_n)\Big).
\end{align*}
Since 
\[
\lim_{n\to\infty} m^{-1} E_q(r_n) = 1,
\]
we have for all large $n$
\[
C(\theta r_n) \le 3 \beta \epsilon^{3/5}.
\]
If $\epsilon$ is small, then we take $R=\theta r_N$ and a large natural number $N$ so that $\z$ is a regular point.

If $m=0$, then theorem is trivially true.
Indeed, we can choose $\theta$ so that $\beta \theta^3 M^{3/(q-1)}$ is small enough and 
\[
\lim_{n\to\infty} \beta \theta^{-2} M^{(3-q)/(q-1)} E_q(r_n) = 0.
\]
Therefore $\z$ is a regular point.
This completes the proof of Theorem \ref{T1}.

\section{Proof of Theorem \ref{T2}}
\label{S8}

Finally, we give the proof of Theorem \ref{T2}.
From Remark \ref{R31} we have for all $2\theta r<R$ and $0< \theta < 1/4$
\[A(\theta r) + E(\theta r) 
\lesssim [1+E(2\theta r)] G(2\theta r) + P(2\theta r) + F(R)\]
where $R$ will be determined later.
From Lemma \ref{L44}, we have for $0 < \theta < 1/4$
\[P(2\theta r) \lesssim \theta^2 P(r) + \theta^{-2} E(r)^2 + \theta^{-2} F(R).\]
From Lemma \ref{L54} 
\[G(2\theta r) \lesssim  \theta^{-1} E(r) + \theta^2 A(r).\]
We also have 
\[E(2\theta r) \le (2\theta)^{-1} E(r)\] 
by the definition.
Combining all the above estimates, we conclude that for $2\theta r<R$ and $0< \theta < 1/4$
\begin{equation}
\label{E81}
\begin{split}
&A(\theta r) + E(\theta r) \\
&\lesssim \theta A(r) E(r) + \theta^2 A(r) + \theta^2 P(r) + \theta^{-1} E(r) + \theta^{-2} E(r)^2 + \theta^{-2} F(R).
\end{split}
\end{equation}

Let us denote 
\[M = \overline{E} \quad \text{ and } \quad m = \underline{E}.\]
If $m=0$, then theorem is trivially true.
We may consider the case $0 < m$ and $1 \le M < \infty$.
Lemma \ref{L61} with $q=2$ implies that there is a positive number $R_1$ such that for all $0<r\le R_1$
\begin{equation}
\label{E82}
A(r) \lesssim M^2.
\end{equation}
Lemma \ref{L62} implies that there is a positive number $R_2$ such that for all $0<r\le R_2$
\begin{equation}
\label{E83}
P(r) \lesssim M^2.
\end{equation}
Since $\lim_{r\to0} F(r)=0$, there is a positive number $R_3$ such that for all $0<r\le R_3$
\begin{equation}
\label{E84}
F(r) \le M^{-2} \epsilon^2.
\end{equation}
We also have for some $R_4$ and for all $0<r\le R_4$
\[E(r) \le 2M.\]
We can take 
\[R = \min\set{R_1,R_2,R_3,R_4}\]
and fix a sequence $r_n$ such that $r_n < R$, 
\[\lim_{n\to\infty} r_n = 0 \quad \text{ and } \quad \lim_{n\to\infty} E(r_n) = m.\]

Combining \eqref{E81}--\eqref{E84}, we have for all sufficiently large $n$ and for all $0< \theta < 1/4$
\begin{align*}
&A(\theta r_n) + E(\theta r_n) \\
&\lesssim \theta M^2 E(r_n) + \theta^2 M^2 + \theta^{-1} E(r_n) + \theta^{-2} E(r_n)^2 + \theta^{-2} M^{-2} \epsilon^2 \\
&\lesssim \theta M^2 m + \theta^2 M^2 + \theta^{-1} m + \theta^{-2} m^2 + \theta^{-2} M^{-2} \epsilon^2.
\end{align*}
Since $Mm < \epsilon$ and $\epsilon<1/16$, we can take $\theta = \epsilon^{1/2} M^{-1} < 1/4$ so that the above estimate becomes 
\begin{equation}
\label{E85}
A(\theta r_n) + E(\theta r_n) 
\lesssim \epsilon^{3/2} + \epsilon + \epsilon^{1/2}
\lesssim \epsilon^{1/2}.
\end{equation}
As it has been proved in \cite{Wo10} there exists an absolute constant $\epsilon$ such that if $D(r) \le \epsilon$ that $z$ is a regular point (cf. \cite{Wo15}).
This together with Lemma \ref{L55} shows that there exists a positive constant $\epsilon$ such that $\z$ is a regular point if for some $r>0$
\[A(r) + E(r) < \epsilon.\] 
Due to \eqref{E83} and \eqref{E85}, we conclude that the reference point $z$ is regular for the case $m>0$.
This completes the proof of Theorem \ref{T2}.

\section*{Acknowledgement}

H. J. Choe has been supported by the National Reserch Foundation of Korea(NRF) grant, 
funded by the Korea government(MSIP) (No. 20151009350).
J. Wolf has been supported by the German Research Foundation (DFG) through the project WO1988/1-1; 612414.  
M. Yang has been supported by the National Research Foundation of Korea(NRF) grant funded by the Korea government(MSIP) (No. 2016R1C1B2015731).

\end{document}